
\documentclass[letterpaper,conference]{ieeeconf}  

\IEEEoverridecommandlockouts                              
\overrideIEEEmargins

\usepackage{graphicx} 
\usepackage{mathptmx} 
\usepackage{times} 
\usepackage{amsmath} 
\usepackage{amssymb}  

\usepackage[noadjust]{cite}

\newtheorem{definition}{Definition}[section]
\newtheorem{assumption}[definition]{Assumption}
\newtheorem{lemma}[definition]{Lemma}
\newtheorem{proposition}[definition]{Proposition}
\newtheorem{theorem}[definition]{Theorem}
\newtheorem{remark}[definition]{Remark}
\newtheorem{corollary}[definition]{Corollary}
\newtheorem{example}[definition]{Example}

\newcommand{\norm}[1]{\lVert #1 \rVert}

\newcommand{\nnorm}[1]{{\left\vert\kern-0.25ex\left\vert\kern-0.25ex\left\vert #1 \right\vert\kern-0.25ex\right\vert\kern-0.25ex\right\vert}}

\DeclareMathOperator{\diag}{diag} 
\DeclareMathOperator{\myvec}{vec} 
\renewcommand{\vec}{\myvec}

\DeclareMathOperator\conv{conv}
\DeclareMathOperator\supp{supp}
\DeclareMathOperator\interior{int}

\newenvironment{proofof}[1]{
\begin{proof}}{\end{proof}}

\newenvironment{proofsk}{
\begin{proof}}{\end{proof}}

\newcommand{\iref}[1]{\ref{#1})}

\newcommand{\bm}[1]{\begin{bmatrix}#1\end{bmatrix}}

\newcommand{\myscale}{.17}


\newcommand\hyperrefopt{bookmarks=true,bookmarksnumbered=true,
pdfpagemode={UseOutlines},plainpages=false,pdfpagelabels=true,
colorlinks=true,linkcolor={black},citecolor={black},urlcolor={black},
pdftitle={}, pdfsubject={}, pdfauthor={Masaki Ogura}, pdfkeywords={}}
\usepackage[\hyperrefopt]{hyperref}

\title{\LARGE \bf
On the Mean Stability of a Class of Switched Linear Systems
}

\author{Masaki Ogura and Clyde F.~Martin
\thanks{M.~Ogura and C.~F.~Martin are with the Department of Mathematics and Statistics, Texas Tech University, Broadway and Boston, Lubbock, TX 79409-1042, USA.  
        {\tt\small Masaki.Ogura@ttu.edu}, {\tt\small Clyde.F.Martin@ttu.edu}}%
}

\newcommand{\afterequation}{\vskip 5pt}

\begin{document}

\maketitle
\thispagestyle{empty}
\pagestyle{empty}

\begin{abstract}
This paper investigates the mean stability of a class of  discrete-time
stochastic switched linear systems using the $L^p$-norm joint spectral
radius of the probability distributions governing the switched systems.
First we prove a converse Lyapunov theorem that shows the equivalence
between the mean stability and the existence of a homogeneous Lyapunov
function. Then we show that, when $p$ goes to $\infty$, the stability of
the $p$th mean becomes equivalent to the absolute asymptotic stability
of an associated deterministic switched system. Finally we study the
mean stability of Markovian switched systems. Numerical examples are
presented to illustrate the results. 
\end{abstract}


\section{Introduction}

This paper studies the discrete-time stochastic switched linear system of the form
\begin{equation} \label{eq:OurSystem}
\Sigma: x(k+1) = A_{k} x(k)
\end{equation}
where $x(k)$ represents a finite-dimensional state vector and
$\{A_k\}_{k=0}^\infty$ is a stochastic process taking values in the set
of square matrices of an appropriate dimension. One of its most natural
stability notions is almost sure stability~\cite{Kozin1969}, which
requires that the state $x(k)$ converges to the origin with probability
one as $k\to\infty$. However this stability is difficult to check in
practice because it is characterized by a quantity called the top
Lyapunov exponent, whose computation is in general a hard
problem~\cite{Tsitsiklis1997a}. For example, the necessary and
sufficient condition in \cite{Dai2008a} is one of the most tractable
conditions but cannot necessarily be checked with finite computation.

This difficulty has motivated many authors to study another stability,
called $p$th {\it mean stability}, which requires that the expected
value of the $p$th power of the norm of the state~$x(k)$ converges
to~$0$. It is well
known~\cite{Jungers2010,Fang2002a,FANG1994,Fragoso2005,Vargas2010} that
the $p$th mean stability can be often checked by computing the spectral
radius of a matrix that is easy to compute. Recently the authors
showed~\cite{Ogura2012b} that, if each $A_k$ follows an identical
probability distribution independently and either $p$ is even or the
distribution possess a certain invariance property, then the $p$th mean
stability of  the system \eqref{eq:OurSystem} is characterized by the
spectral radius of a matrix, generalizing the results obtained in the
literature \cite{Jungers2010,Fang2002a,FANG1994}.  Also it is shown that
the mean square stability is equivalent to the existence of a certain
quadratic Lyapunov function. The derivation of these results depends on
an extended version \cite{Ogura2012b} of {\it $L^p$-norm joint spectral
radius}~\cite{Protasov1997}.

Generalizing the above result on Lyapunov functions, in this paper we
first show that, for a general even exponent~$p$, the $p$th mean
stability of $\Sigma$ is equivalent to the existence of a homogeneous
Lyapunov function of degree~$p$. This equivalence is still true for a
general $p$ provided the distribution possesses a certain invariance
property. Imposing that the value of the function decreases along
trajectories only in {\it expectation} enables us to construct a
Lyapunov function even when the system is not stable for an arbitrary
switching signal, as opposed to the results in the
literature~\cite{Molchanov1989,Dayawansa1999,Doan2013},  Moreover our
Lyapunov function can be constructed easily by solving a linear matrix
inequality or an eigenvalue problem. 

Then we study a limiting behavior of the $p$th mean stability. 
It is well known \cite{Fang2002} that, roughly speaking, the
$p$th mean stability becomes equivalent to almost sure stability in  the
limit of $p \to 0$. As a counterpart of this fact we show that, in the
limit of $p\to\infty$, the $p$th mean stability is equivalent to the
stability of an associated {\it deterministic}
switched system for an arbitrary switching. This result will be proved by showing an extension of
the formulas \cite{Blondel2005,Xu2011} that express joint spectral
radius as the limit of $L^p$-norm joint spectral radius. 

Finally we extend the results in \cite{Ogura2012b} to Markovian switched
systems, where $A_k$ in \eqref{eq:OurSystem} is not  necessarily
identically and independently distributed.  Again assuming the
invariantce property of the Markov process we will give a
characterization of the $p$th mean stability. 

This paper is organized as follows.  After preparing necessary
mathematical notation and conventions, in
Section~\ref{sec:GJSR&Stability} we review the basic facts of the
stability of discrete-time linear switched systems.
Section~\ref{sec:LyapunovTheorem} proves a converse Lyapunov theorem.
Then in Section~\ref{sec:Limit} we study the limiting behavior of the
$p$th mean stability. Finally Section~\ref{sec:Markov} studies the mean
stability of Markovian switched systems.

\subsection{Mathematical Preliminaries}

Let $\mathbb{R}_+$ denote the set of nonnegative numbers.  The spectral
radius of a square matrix is denoted by $\rho(\cdot)$. A
subset~$K\subset \mathbb{R}^d$ is called a cone if $K$ is closed under
multiplication by nonnegative scalars. The cone is said to be solid if
it possesses a nonempty interior. We say that a cone is pointed if it
contains no line; i.e., if $x, -x \in K$ then $x=0$. We say that $K$ is
{\it proper} if it is closed, convex, solid, and pointed. For example
the positive orthant $\mathbb{R}^{d}_+$ of $\mathbb{R}^d$ is a proper
cone.  The dual cone $K^*$ is defined by
\begin{equation*}
K^* = \{ f\in\mathbb{R}^d : f^\top x \geq 0 \text{ for every $x\in K$}  \}. 
\end{equation*}
A matrix $M\in \mathbb{R}^{d\times d}$ is said to leave $K$ invariant,
written $M\geq^K 0$, if $MK\subset K$. A subset $\mathcal M \subset
\mathbb{R}^{d\times d}$ is said to leave $K$ invariant if any matrix in
$\mathcal M$ leaves $K$ invariant. For $M, N \in \mathbb{R}^{d\times
d}$, by $M\geq^K N$ we mean $M-N \geq^K 0$.  $M$ is said to be
$K$-positive, written $M>^K 0$, if $M(K - \{0\})$ is contained in the
interior $\interior K$ of $K$.  The next lemma collects some elementary
facts of cones and their duals. 
\begin{lemma}\label{lem:basic:M>^K0}
Let $K$ be a proper cone and let $M >^K 0$. 
\begin{enumerate}
\item 
$M$ has a simple eigenvalue $\rho(M)$, which is greater than the
magnitude of any other eigenvalue of $M$. Moreover the eigenvector 
corresponding to the eigenvalue $\rho(M)$ is in $\interior K$ (see,
e.g., \cite{Vandergraft1968});

\item 
$K^*$ is a proper cone and $M^\top$ is $K^*$-positive 
\cite[2.23]{Berman1979}. 
\end{enumerate}
\end{lemma}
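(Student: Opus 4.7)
The plan is to handle the two parts by rather different classical arguments.

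For part (1), I would invoke a Perron--Frobenius type theorem for cone-preserving maps. Since $M>^K 0$, the matrix sends $K\setminus\{0\}$ into $\interior K$. First, choose any $g\in\interior K^*$ (which is nonempty by part (2) once that is established independently) and consider the compact convex cross-section $C = \{x\in K : g^\top x = 1\}$. The map $x\mapsto Mx/(g^\top Mx)$ is a continuous self-map of $C$, so Brouwer's fixed-point theorem produces $v\in C$ with $Mv = \lambda v$ where $\lambda = g^\top Mv > 0$; automatically $v = Mv/\lambda \in \interior K$. For the remaining claims---that $\lambda = \rho(M)$, that $\lambda$ is simple, and that $|\mu|<\lambda$ for every other eigenvalue $\mu$---I would use the standard comparison technique: an eigenvector $y$ associated with $\mu$ can be bracketed between scalar multiples of $v$ using the solidity of $\interior K$, and iterating $M$ on both sides forces the strict inequality; simplicity follows by ruling out a generalized eigenvector, whose iteration would grow at rate $k\lambda^k$ and violate the same comparison. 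This is precisely the statement of Vandergraft's theorem, which I would cite rather than rederive.

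For part (2), properness of $K^*$ is essentially duality bookkeeping: closedness and convexity of $K^*$ are immediate from its definition as an intersection of closed halfspaces $\{f : f^\top x \geq 0\}$ indexed by $x\in K$; solidity follows from the pointedness of $K$ via the separating hyperplane theorem applied to a compact base of $K$ and the origin; and pointedness follows from the solidity of $K$, since $\pm f \in K^*$ forces $f^\top x = 0$ on $\interior K$ and hence on all of $\mathbb{R}^d$. To show $M^\top >^{K^*} 0$, I would take any nonzero $f\in K^*$ and verify $M^\top f \in \interior K^*$. This is equivalent to showing $(M^\top f)^\top x > 0$ for every $x \in K\setminus\{0\}$. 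But $(M^\top f)^\top x = f^\top(Mx)$, and by the $K$-positivity of $M$ we have $Mx\in\interior K$; combined with the fact that any nonzero functional in $K^*$ is strictly positive on $\interior K$, this gives the desired strict inequality.

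The main obstacle is the Perron--Frobenius portion of part (1)---particularly the simplicity and strict spectral dominance of $\rho(M)$---which is genuinely nontrivial. Since it is the precise content of the cited Vandergraft theorem, and the dual-cone facts are the standard material in Berman--Plemmons, my actual proof in the paper would amount to little more than assembling the citations in the right order and remarking how the $K$-positivity of $M$ transfers to $K^*$-positivity of $M^\top$ through the identity $f^\top(Mx) = (M^\top f)^\top x$.
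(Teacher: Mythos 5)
Your proposal is correct and matches the paper's treatment: the paper gives no proof of this lemma at all, simply citing Vandergraft for part (1) and Berman--Plemmons [2.23] for part (2), exactly as you conclude. Your sketches of the underlying arguments (Brouwer on a cross-section plus the comparison technique for the Perron--Frobenius part, and the identity $f^\top(Mx)=(M^\top f)^\top x$ together with the characterization of $\interior K^*$ for the dual part) are sound but go beyond what the paper records.
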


A norm $\norm{\cdot}$ on $\mathbb{R}^d$ is said to be cone absolute
\cite{Seidman2005} with respect to a proper cone~$K$ if, for every
$x\in\mathbb{R}^d$, 
\begin{equation}\label{eq:def:coneabsolute}
\norm{x} = \inf_{\substack{v, w\in K\\x=v-w}}\norm{v+w}. 
\end{equation}
Also we say that $\norm{\cdot}$ is cone linear with respect to $K$ if
there exists $f\in K^*$ such that 
\begin{equation} \label{eq:def:conelinear}
\norm{x} = f^\top x \text{ for every }x\in K. 
\end{equation}
A norm that is cone absolute and cone linear with respect to a proper
cone is said to be cone linear absolute. Every $f\in \interior K^*$ yields
\cite{Seidman2005} the cone linear absolute norm determined by
\eqref{eq:def:conelinear} and \eqref{eq:def:coneabsolute}, and we denote
this norm by $\norm{\cdot}_f$. The next lemma lists some properties of
cone linear absolute norms. 
\begin{lemma}[\cite{Seidman2005}]\label{lemma:ProdMonotone}
Let $\norm{\cdot}$ be a cone linear absolute norm.  
\begin{enumerate}
\item The induced norm of $M \in \mathbb{R}^{d\times d}$, defined by
$\norm{M} = \sup_{x\in \mathbb{R}^d}\frac{\norm{Mx}}{\norm{x}}$, 
satisfies
\begin{equation}\label{eq:normMshort}
\norm{M} = \sup_{x\in K}\frac{\norm{Mx}}{\norm{x}}. 
\end{equation}

\item If $M \geq^K N \geq^K 0$ then $\norm{M} \geq \norm{N}$. 
 
\item If $M_i \geq^K N_i \geq^K 0$ for all $i=1, \dotsc, k$ then 
$\norm{M_1 \dotsi M_k} \geq \norm{N_1 \dotsi N_k}$. 
\end{enumerate}
\end{lemma}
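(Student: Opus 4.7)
The plan is to prove the three parts in sequence, leveraging cone absoluteness for (1), cone linearity for (2), and an induction argument for (3). Throughout I will exploit the linear representation $\norm{x}=f^\top x$ for $x\in K$, where $f\in\interior K^*$ is the functional that defines the cone linear absolute norm.

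For part (1), the inequality $\sup_{x\in K}\norm{Mx}/\norm{x}\le\norm{M}$ is immediate because $K\subset\mathbb{R}^d$, so only the reverse direction requires work. Let $\alpha:=\sup_{x\in K}\norm{Mx}/\norm{x}$, fix $x\in\mathbb{R}^d$ and $\epsilon>0$, and use cone absoluteness to pick $v,w\in K$ with $x=v-w$ and $\norm{v+w}\le\norm{x}+\epsilon$. The triangle inequality applied to $Mx=Mv-Mw$ will give $\norm{Mx}\le\norm{Mv}+\norm{Mw}\le\alpha(\norm{v}+\norm{w})$. The key step is to observe that since $v$, $w$ and $v+w$ all lie in $K$, cone linearity makes the norm additive on this triple, yielding $\norm{v}+\norm{w}=f^\top v+f^\top w=f^\top(v+w)=\norm{v+w}$. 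Chaining these estimates and letting $\epsilon\to 0$ gives $\norm{Mx}\le\alpha\norm{x}$ for every $x$, whence $\norm{M}\le\alpha$.

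Part (2) becomes a short calculation once part (1) is in hand: for any $x\in K$ the vectors $Mx$, $Nx$ and $(M-N)x$ all lie in $K$ by assumption, so cone linearity gives $\norm{Mx}=f^\top Mx=f^\top Nx+f^\top(M-N)x\ge f^\top Nx=\norm{Nx}$, and taking the supremum over $x\in K$ via (1) finishes the argument. Part (3) then follows by induction on $k$: the identity $MP-NQ=(M-N)P+N(P-Q)$ together with the closure of $\{A:A\geq^K 0\}$ under composition shows that $M\geq^K N\geq^K 0$ and $P\geq^K Q\geq^K 0$ imply $MP\geq^K NQ\geq^K 0$, so iterating over the $k$ factors produces $M_1\dotsi M_k\geq^K N_1\dotsi N_k\geq^K 0$, whereupon part (2) applies directly.

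The main obstacle will be part (1). The subtlety is that $M$ is not assumed to leave $K$ invariant, so the images $Mv$ and $Mw$ need not belong to $K$ even when $v$ and $w$ do; a direct cone-linear evaluation of $\norm{Mx}$ via the functional $f$ is therefore unavailable, and the argument must instead route through the triangle inequality on the image side and the additivity of $\norm{\cdot}$ on the preimage side.
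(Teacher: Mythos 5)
Your proof is correct. Note, however, that the paper does not actually prove parts (1) and (2): it simply cites Seidman--Schneider--Arav for them, and disposes of part (3) with the remark that it ``immediately follows from the second one.'' Your treatment of part (3) matches the paper's intent exactly -- you derive it from part (2) -- but you supply the detail the paper leaves implicit, namely that $M\geq^K N\geq^K 0$ and $P\geq^K Q\geq^K 0$ imply $MP\geq^K NQ\geq^K 0$ via the decomposition $MP-NQ=(M-N)P+N(P-Q)$ and the closure of $K$-invariant matrices under products and sums. For parts (1) and (2) you give a self-contained argument in place of the citation, and the argument is sound: the additivity $\norm{v}+\norm{w}=f^\top(v+w)=\norm{v+w}$ for $v,w\in K$ combined with the infimum characterization of cone absoluteness correctly yields $\norm{Mx}\le\alpha\norm{x}$ for all $x$, and your identification of the real subtlety -- that $Mv$ and $Mw$ need not lie in $K$, so one must bound $\norm{Mv}$ and $\norm{Mw}$ by the supremum $\alpha$ rather than evaluate them with $f$ -- is exactly right. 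What your version buys is independence from the external reference; what the paper's version buys is brevity.
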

\begin{proof}
The first two claims can be found in \cite{Seidman2005}. The last one
immediately follows from the second one. 
\end{proof}

We denote the Kronecker product (see, e.g., \cite{Brewer1978}) of
matrices $M$ and $N$ by $M\otimes N$.  For a positive integer $p$ define
the Kronecker power $M^{\otimes p}$ by $M^{\otimes 1} := M$ and
$M^{\otimes (p)} = M^{\otimes (p-1)}\otimes M$ recursively for a general
$p$. It holds~\cite{Brewer1978} that
\begin{equation}\label{eq:ABotimesp}
  (MN)^{\otimes p} = M^{\otimes p} N^{\otimes p}.
\end{equation}
Also for $\mathcal M \subset \mathbb{R}^{d\times d}$ define
${\mathcal M}^{\otimes p} := 
\{ M^{\otimes p}: M\in \mathcal M \} \subset \mathbb{R}^{d^p \times d^p}. $
The next lemma is proved in \cite{Blondel2005}. 
\begin{lemma}\label{lemma:KronProdLeave}
Let $\mathcal M \subset \mathbb{R}^{d\times d}$.  If $\mathcal M$ leaves
a proper cone $K$ invariant then ${\mathcal M}^{\otimes p}$  leaves the
proper cone 
\begin{equation}\label{eq:Kptilde}
\tilde K_{p} := \overline{\conv (K^{\otimes p})}
\end{equation}
invariant. 
\end{lemma}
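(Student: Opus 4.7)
The plan is to verify the four defining properties of a proper cone—closedness, convexity, solidity, pointedness—for $\tilde K_p$, and then to check the invariance separately. Closedness and convexity are immediate from the construction $\tilde K_p = \overline{\conv(K^{\otimes p})}$. That $\tilde K_p$ is a cone follows from the identity $\lambda(x_1\otimes\cdots\otimes x_p) = (\lambda x_1)\otimes x_2\otimes\cdots\otimes x_p$, which shows $K^{\otimes p}$ is stable under multiplication by nonnegative scalars, a property preserved by convex hull and closure. The invariance itself is then almost a formality once \eqref{eq:ABotimesp} is in hand: for $M\in\mathcal M$, $M^{\otimes p}(x_1\otimes\cdots\otimes x_p) = (Mx_1)\otimes\cdots\otimes(Mx_p) \in K^{\otimes p}$, and linearity plus continuity of $M^{\otimes p}$ extend this inclusion first to $\conv(K^{\otimes p})$ and then to its closure $\tilde K_p$.

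Solidity I would obtain by a basis argument. Because $K$ is solid, one may pick a basis $\{e_1,\ldots,e_d\}$ of $\mathbb R^d$ with every $e_i\in K$ (any basis contained in $\interior K$ will do). Then the $d^p$ elementary tensors $e_{i_1}\otimes\cdots\otimes e_{i_p}$ are linearly independent in $\mathbb R^{d^p}$ and all lie in $K^{\otimes p}\subset\tilde K_p$, so the linear span of $\tilde K_p$ is the whole ambient space. A closed convex cone that spans its ambient space has nonempty interior, which gives solidity.

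The main obstacle, and the step I would treat with care, is pointedness, which I would deduce by duality rather than by controlling limits inside the closure directly. Since $K$ is a proper cone, $K^*$ is also proper, and in particular solid. A direct computation with elementary tensors yields $(f_1\otimes\cdots\otimes f_p)^{\top}(x_1\otimes\cdots\otimes x_p) = \prod_{i=1}^{p}f_i^{\top}x_i \geq 0$ for all $f_i\in K^*$ and $x_i\in K$, so $(K^*)^{\otimes p}\subset(\tilde K_p)^*$; closedness and convexity of the dual cone then upgrade this to $\overline{\conv((K^*)^{\otimes p})}\subset(\tilde K_p)^*$. Applying the previous paragraph's basis argument with $K$ replaced by $K^*$ shows that this inner cone already has nonempty interior in $\mathbb R^{d^p}$, whence $(\tilde K_p)^*$ is solid, which is equivalent to $\tilde K_p$ being pointed. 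A direct primal approach—extracting limits of convex combinations of elementary tensors that cancel—appears much harder to control, which is why this dualization is the decisive idea.
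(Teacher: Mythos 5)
Your proof is correct. The paper itself gives no argument for this lemma --- it simply cites Blondel and Nesterov --- and your route (closedness, convexity, and invariance read off from the construction and from \eqref{eq:ABotimesp}; solidity from a basis of $\mathbb{R}^d$ inside $\interior K$ whose $d^p$ elementary tensor products form a basis of $\mathbb{R}^{d^p}$; pointedness by dualizing, since $(K^*)^{\otimes p}\subset(\tilde K_p)^*$ makes the dual cone solid) is exactly the standard one used in that reference. One point worth making explicit: you are reading $K^{\otimes p}$ as the set of \emph{all} elementary tensors $x_1\otimes\cdots\otimes x_p$ with $x_i\in K$, not merely the diagonal powers $x^{\otimes p}$ that the paper's convention ${\mathcal M}^{\otimes p}=\{M^{\otimes p}:M\in\mathcal M\}$ might suggest. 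This is the right reading --- and the only one under which the lemma is literally true --- because $\conv\{x^{\otimes p}:x\in K\}$ lies in the symmetric subspace of $\mathbb{R}^{d^p}$ and so cannot be solid for $p\geq 2$; your solidity and pointedness arguments both rely on having the full set of elementary tensors available, and it would be worth stating this convention at the outset.
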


Let $\mu$ be a probability distribution on $\mathbb{R}^{d\times d}$. The
support of $\mu$ is denoted by $\supp \mu$.  For a measurable function
$f$ on $\mathbb{R}^{d\times d}$ we denote the expected value of $f$ by
$E_\mu[f] = \int_{\mathbb{R}^{d\times d}} f(X)\,d\mu(X)$. The subscript
$\mu$ will be omitted when it is clear from the context. We define the
probability distribution $\mu^{\otimes p}$ on $\mathbb{R}^{d^p\times
d^p}$ as the image of the measure~$\mu$ under the
mapping~$(\cdot)^{\otimes p}\colon \mathbb{R}^{d\times d} \to
\mathbb{R}^{d^p\times d^p} \colon M\mapsto M^{\otimes p}$ (for detail
see, e.g., \cite{Bogachev2007}).

Finally we define the operator $\vec \colon \mathbb{R}^{m \times d} \to
\mathbb{R}^{md}$ by 
\begin{equation*}
\vec \begin{bmatrix}
M_1 & \cdots & M_d
\end{bmatrix}
:=
\begin{bmatrix}
M_1\\\vdots\\M_d
\end{bmatrix},\ M_1, \dotsc, M_d \in \mathbb{R}^m. 
\end{equation*}
We extend this definition to the product space $(\mathbb{R}^m)^N$ as
\begin{equation*}
\vec(M_1, \dotsc, M_N) := \vec\begin{bmatrix}
M_1 & \cdots & M_N
\end{bmatrix}. 
\end{equation*}

\section{Stability of Switched Linear Systems}
\label{sec:GJSR&Stability}

This section reviews the stability of switched linear systems, namely,
the relationship between mean stability and $L^p$-norm joint spectral
radius \cite{Ogura2012b}, and that between absolute asymptotic stability
and joint spectral radius.  Throughout this paper $p$ will denote a
positive integer and $\mu$ will denote a probability distribution
on~$\mathbb{R}^{d\times d}$ with compact support.  Unless otherwise
stated, $A$ and $A_k$ ($k=0, 1, \dotsc$)  will denote random variables
that follow $\mu$ independently. $\norm{\cdot}$ will denote a norm on
$\mathbb{R}^d$ or $\mathbb{R}^{d\times d}$. 

Consider the stochastic linear switched system
\begin{equation*}
\Sigma_\mu : x(k+1) = A_k x(k),\ \text{$A_k$ follows $\mu$ independently}. 
\end{equation*}
The solution of $\Sigma_\mu$ with the initial state $x(0) = x_0$ is
denoted by $x(\cdot;x_0)$. 

\begin{definition} \label{def:MeanStability}
We say that $\Sigma_\mu$ is 
\begin{enumerate}
\item 
{\it exponentially stable in $p$th mean} ({\it
$p$th mean stable} for short) if there exist  $M>0$ and
$\beta>0$ such that, for any initial state $x_0$,
\begin{equation*}
E[\norm{x(k;x_0)}^p] \leq Me^{-\beta k}\norm{x_0}^p;
\end{equation*}

\item  
{\it exponentially stable in $p$th moment} ({\it
$p$th moment stable} for short) if there exist $M, \beta
>0$ such that, for every $x_0$, 
\begin{equation*}
\norm{E[x(k;x_0)^{\otimes p}]}
\leq
M e^{-\beta k} \norm{x_0}^p.
\end{equation*}
\afterequation
\end{enumerate}
\end{definition}

The mean stability of $\Sigma_\mu$ is closely related \cite{Ogura2012b}
to the $L^p$-norm joint spectral radius of $\mu$ defined as follows. 
\begin{definition}[\cite{Ogura2012b}]
The {\it $L^p$-norm  joint spectral radius} ({\it
$p$\nobreakdash-radius} for short) of~$\mu$ is defined by
\begin{equation}\label{eq:def:pthradius}
\rho_{p, \mu}
:=
\lim_{k\to\infty} \left(E[\norm{A_k \dotsi A_1}^p ]\right)^{1/pk}. 
\end{equation}
\afterequation
\end{definition}

This definition extends \cite{Ogura2012b} the classical $L^p$-norm joint
spectral radius (see, e.g., \cite{Jungers2011b}) for a finite set of
matrices.  By the compactness of the support of $\mu$, the $p$-radius
$\rho_{p, \mu}$ is well-defined and is finite \cite{Ogura2012b}. Also,
by the equivalence of the norms on $\mathbb{R}^{d\times d}$, the value
of $p$-radius does not depend on the norm used in
\eqref{eq:def:pthradius}. 

The next proposition summarizes the characterization of the $p$th mean
stability obtained in \cite{Ogura2012b}. 
\begin{proposition}[{\cite{Ogura2012b}}] \label{prop:stabilitychar}
Assume that either 
\begin{enumerate}\renewcommand{\theenumi}{\alph{enumi}}
\item $p$ is even or 
\item $\supp \mu$ leaves a proper cone invariant. 
\end{enumerate}
Then the following conditions are equivalent:
\begin{enumerate}
\item $\rho_{p, \mu} < 1$;
\item $\Sigma_\mu$ is $p$th mean stable;
\item $\Sigma_\mu$ is $p$th moment stable. 
\end{enumerate}
Moreover it holds that
\begin{equation*}
\rho_{p, \mu} = \left(\rho(E[A^{\otimes p}])\right)^{1/p}. 
\end{equation*}
\afterequation
\end{proposition}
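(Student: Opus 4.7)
The plan is to lift $\Sigma_\mu$ to the $p$-fold Kronecker power and exploit cone invariance of the lifted system together with Gelfand's formula. Setting $B := E[A^{\otimes p}]$, the pathwise identity $x(k;x_0)^{\otimes p} = A_{k-1}^{\otimes p}\dotsi A_0^{\otimes p} x_0^{\otimes p}$ from \eqref{eq:ABotimesp}, combined with independence of the $A_k$'s, immediately yields $E[x(k;x_0)^{\otimes p}] = B^k x_0^{\otimes p}$. This already reduces statement (3) to the decay of $B^k x_0^{\otimes p}$, hence to $\rho(B)<1$ via Gelfand's formula.

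Next I would construct a proper cone $\tilde K_p$ invariant under every $A^{\otimes p}$, together with a norm $\norm{\cdot}_F$ that is linear on $\tilde K_p$. Under (b), this is Lemma~\ref{lemma:KronProdLeave} applied to the given cone $K\subset\mathbb{R}^d$, with $F := f^{\otimes p}$ for some $f\in\interior K^*$ producing the cone linear absolute norm $\norm{\cdot}_F$. Under (a), the identity $(-x)^{\otimes p}=x^{\otimes p}$ (valid for even $p$) allows the choice $\tilde K_p := \overline{\conv\{x^{\otimes p}:x\in\mathbb{R}^d\}}$, a proper cone in the symmetric-tensor subspace $V\subset\mathbb{R}^{d^p}$, and any $F\in\interior\tilde K_p^*$ then suffices. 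In both settings, the recipe $\norm{x}_f := (F^\top x^{\otimes p})^{1/p}$ defines a norm on $\mathbb{R}^d$, and a short computation using \eqref{eq:Kptilde} together with \eqref{eq:normMshort} gives the key identities $\norm{x^{\otimes p}}_F = \norm{x}_f^p$ and $\norm{M^{\otimes p}}_F = \norm{M}_f^p$ (valid whenever $M\geq^K 0$ under (b), and for arbitrary $M$ under (a)).

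To establish $\rho_{p,\mu} = \rho(B)^{1/p}$ I would pick any interior point $\bar y\in\interior\tilde K_p$; its interior property furnishes a constant $c>0$ such that $y\leq^{\tilde K_p} c\,\bar y$ for every $y\in\tilde K_p$ with $\norm{y}_F\leq 1$, whence cone-linearity of $\norm{\cdot}_F$ on $\tilde K_p$ together with Lemma~\ref{lemma:ProdMonotone} yields the sandwich $F^\top M\bar y / F^\top\bar y \leq \norm{M}_F \leq c\,F^\top M\bar y$ for every $M\geq^{\tilde K_p} 0$. Applied pathwise to the random product $M = A_k^{\otimes p}\dotsi A_1^{\otimes p}$ and taking expectations this gives
\[
\norm{B^k}_F \;\leq\; E\bigl[\norm{A_k^{\otimes p}\dotsi A_1^{\otimes p}}_F\bigr] \;\leq\; c\,F^\top B^k\bar y,
\]
whose two ends grow at the common exponential rate $\rho(B)$ by Gelfand's formula; translating via $\norm{M^{\otimes p}}_F = \norm{M}_f^p$ then yields $\rho_{p,\mu}=\rho(B)^{1/p}$ and the equivalence (1)$\iff$(2). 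The remaining implication (2)$\iff$(3) follows from the companion identity $E[\norm{x(k;x_0)}_f^p] = F^\top B^k x_0^{\otimes p}$ (valid for $x_0\in K$ and extended to all $x_0\in\mathbb{R}^d$ via cone absoluteness and the triangle inequality under (b), valid directly under (a)). The main obstacle will be the unified treatment of the two cases, in particular verifying in case (a) that the spectral radius of $B$ restricted to the symmetric-tensor subspace $V$ actually equals $\rho(B)$ on all of $\mathbb{R}^{d^p}$, so that the characterization obtained inside $V$ lifts to the clean statement $\rho_{p,\mu}=\rho(E[A^{\otimes p}])^{1/p}$.
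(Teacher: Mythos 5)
This proposition is imported verbatim from \cite{Ogura2012b}; the paper gives no proof of it, so there is nothing internal to compare against. Your proposal is, in outline, a faithful reconstruction of the argument used in that reference: lift to the $p$th Kronecker power, observe $E[x(k;x_0)^{\otimes p}]=B^k x_0^{\otimes p}$ with $B=E[A^{\otimes p}]$, equip the lifted invariant cone $\tilde K_p$ with a cone linear absolute norm, and sandwich $E[\norm{A_k^{\otimes p}\dotsi A_1^{\otimes p}}_F]$ between quantities that grow at rate $\rho(B)$ so that Gelfand's formula delivers $\rho_{p,\mu}=\rho(B)^{1/p}$ and all three equivalences. Two remarks on the soft spots. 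First, in case (a) the function $x\mapsto (F^\top x^{\otimes p})^{1/p}$ for a general $F\in\interior \tilde K_p^*$ is homogeneous and positive definite but need \emph{not} satisfy the triangle inequality once $p\geq 4$ (a positive definite quartic form can have a nonconvex sublevel set), so it is not literally a norm on $\mathbb{R}^d$; this is harmless because $\rho_{p,\mu}$ is norm-independent and $\norm{M^{\otimes p}|_V}_F$ is equivalent, with constants uniform in $M$, to $\norm{M}_2^p$ via $\norm{M^{\otimes p}}_2=\norm{M}_2^p$, but the step should be phrased that way rather than through a purported norm $\norm{\cdot}_f$. Second, the obstacle you flag at the end --- whether $\rho(B)$ restricted to the symmetric subspace equals $\rho(B)$ on all of $\mathbb{R}^{d^p}$ --- dissolves once you note the unconditional inequality $\rho(E[A^{\otimes p}])\leq \lim_k E[\norm{A_k\dotsi A_1}_2^p]^{1/k}=\rho_{p,\mu}^p$ (spectral radius bounded by norm, Jensen, and $\norm{M^{\otimes p}}_2=\norm{M}_2^p$); combined with the lower bound $\rho(B|_V)\geq\rho_{p,\mu}^p$ obtained from your cone argument, this forces $\rho(B|_V)=\rho(B)=\rho_{p,\mu}^p$. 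With those two repairs the proof goes through.
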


We will also need the next lemma that lists basic properties of
$p$-radius. 
\begin{lemma}[{\cite{Ogura2012b}}]\label{lemma:BasicProertiesOfPradius}
\ 
\begin{enumerate} 
\item  $\rho_{p, \mu}$ is non-decreasing with respect to $p$. 

\item  For all $p$ and $k$ it holds that 
\begin{equation} \label{eq:connects_p_and_p/k}
\rho_{p,\mu} = \left[ \rho_{p/k, \mu^{\otimes k}} \right]^{1/k}. 
\end{equation}
\afterequation
\end{enumerate}
\end{lemma}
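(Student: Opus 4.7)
The plan is to handle the two parts separately. Part (1) follows from the classical Lyapunov inequality for $L^{p}$-norms, while Part (2) follows from the multiplicativity identity \eqref{eq:ABotimesp} combined with a judicious choice of matrix norm.

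For Part (1), I would apply Lyapunov's inequality $\bigl(E[X^{r}]\bigr)^{1/r}\le \bigl(E[X^{s}]\bigr)^{1/s}$, valid for any nonnegative random variable $X$ and $0<r\le s$, to $X=\norm{A_{k}\dotsi A_{1}}$ with $r=p_{1}$ and $s=p_{2}$. Raising both sides to the power $1/k$ gives
\[
\bigl(E[\norm{A_{k}\dotsi A_{1}}^{p_{1}}]\bigr)^{1/(p_{1}k)}
\le
\bigl(E[\norm{A_{k}\dotsi A_{1}}^{p_{2}}]\bigr)^{1/(p_{2}k)},
\]
and taking $k\to\infty$ yields $\rho_{p_{1},\mu}\le\rho_{p_{2},\mu}$ directly from the definition \eqref{eq:def:pthradius}.

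For Part (2), I would first exploit the fact that the value of the $p$-radius is independent of the norm used in \eqref{eq:def:pthradius} in order to select the spectral (operator $2$-)norm, which is the key tool because it is multiplicative under Kronecker products: $\norm{M^{\otimes k}}=\norm{M}^{k}$ for every matrix $M$. Next, if $B_{1},\dotsc,B_{n}$ are i.i.d.\ samples from $\mu^{\otimes k}$, then by the construction of the pushforward measure each $B_{i}$ has the distribution of $A_{i}^{\otimes k}$ for $A_{i}\sim\mu$, and \eqref{eq:ABotimesp} then gives $B_{n}\dotsi B_{1}=(A_{n}\dotsi A_{1})^{\otimes k}$ in distribution. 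Hence $\norm{B_{n}\dotsi B_{1}}^{p/k}=\norm{A_{n}\dotsi A_{1}}^{p}$, which collapses the defining limit of $\rho_{p/k,\mu^{\otimes k}}$ into the $k$-th power of the defining limit of $\rho_{p,\mu}$, yielding the claimed formula after extracting a $k$-th root.

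The main obstacle is the choice of norm in Part (2): the identity $\norm{M^{\otimes k}}=\norm{M}^{k}$ fails for an arbitrary matrix norm, so the freedom provided by the norm-invariance of the $p$-radius is essential here. A minor subtlety is that $p/k$ need not be a positive integer, but the limit in \eqref{eq:def:pthradius} makes sense for any positive real exponent, so this does not create a genuine difficulty.
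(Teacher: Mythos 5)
Your proof is correct; the paper itself states this lemma without proof (it is quoted from \cite{Ogura2012b}), and your argument --- Lyapunov's moment inequality for the monotonicity, and the identity $(MN)^{\otimes k}=M^{\otimes k}N^{\otimes k}$ together with a Kronecker-multiplicative norm such as the spectral norm for the second part --- is exactly the standard route taken in that reference. The two subtleties you flag (norm-independence of the $p$-radius justifying the choice of the spectral norm, and the existence of the defining limit for the non-integer exponent $p/k$, which follows from submultiplicativity and independence via Fekete's lemma) are indeed the only points requiring care, and you handle both adequately.
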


Let us also review the notion of joint spectral
radius~\cite{Jungers2009}. Let $\mathcal M$ be a subset of
$\mathbb{R}^{d\times d}$. The joint spectral radius of  $\mathcal M$ is
defined by
\begin{equation*}
\hat \rho(\mathcal M) 
:=
\limsup_{k\to\infty} \sup_{A_1, \dotsc, A_k \in \mathcal M} 
\norm{A_k \dotsi A_1}^{1/k}. 
\end{equation*}
Again this quantity is independent of the norm $\norm{\cdot}$. The joint
spectral radius is known to characterize the stability of the {\it
deterministic} switched system
\begin{equation*}
\Sigma_{\mathcal M} : x(k+1) = A_k x(k),\ A_k \in \mathcal M. 
\end{equation*}
$\Sigma_{\mathcal M}$ is said to be {\it absolutely asymptotically
stable} if $x(k) \to 0$ as $k\to\infty$ for every possible switching
pattern.  The next proposition is well known (for its proof see, e.g.,
\cite{Theys2005}). 
\begin{proposition}\label{prop:AAS:characterization}
$\Sigma_{\mathcal M}$ is absolutely asymptotically stable if and only if
$\hat\rho(\mathcal M) < 1$.
\end{proposition}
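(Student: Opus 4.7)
The plan is to prove the two implications separately, exploiting the definition of $\hat\rho(\mathcal M)$ as the worst-case exponential growth rate of products of matrices drawn from $\mathcal M$.

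For the sufficiency direction, I would suppose $\hat\rho(\mathcal M) < 1$ and fix $r$ with $\hat\rho(\mathcal M) < r < 1$. Applying Fekete's subadditive lemma to the sequence $\log \sup_{A_1, \dotsc, A_k \in \mathcal M}\norm{A_k \dotsi A_1}$, which is subadditive in $k$ by submultiplicativity of the operator norm, upgrades the $\limsup$ in the definition of $\hat\rho(\mathcal M)$ to a genuine limit. Consequently there exists $N$ such that $\norm{A_k \dotsi A_1} \leq r^k$ for every $k \geq N$ and every $A_1, \dotsc, A_k \in \mathcal M$. Therefore $\norm{x(k;x_0)} \leq r^k \norm{x_0}$ uniformly over every switching pattern, establishing absolute asymptotic stability.

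For the necessity direction I would argue by contrapositive: if $\hat\rho(\mathcal M) \geq 1$ then absolute asymptotic stability fails. The definition of $\hat\rho$ supplies, for every $\varepsilon>0$, arbitrarily long products $A_k \dotsi A_1$ with entries in $\mathcal M$ satisfying $\norm{A_k \dotsi A_1} \geq (1-\varepsilon)^k$. After replacing $\mathcal M$ by its closure, which does not change $\hat\rho(\mathcal M)$, the set becomes compact, and a tree-compactness argument in the spirit of K\"onig's lemma allows me to stitch together an unbounded family of such bad finite products into a single infinite switching sequence $\{A_k\}_{k=1}^\infty \subset \mathcal M$ along which $\norm{A_k \dotsi A_1}$ does not tend to zero. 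Picking a unit initial state $x_0$ in a direction of near-maximal amplification then produces a trajectory $x(k;x_0)$ that fails to converge to the origin, contradicting absolute asymptotic stability.

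The hard part is the necessity direction, and within it the compactness step that turns an unbounded family of bad finite products into one bad infinite switching sequence. This is really where the force of the statement lies: pointwise convergence of trajectories for every switching pattern has to be upgraded to a uniform exponential decay rate. By contrast, the sufficiency direction and the remainder of the necessity argument follow routinely from the definition of $\hat\rho$ and the linearity of $\Sigma_{\mathcal M}$.
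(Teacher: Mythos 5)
Your sufficiency argument is correct and is the standard one (the appeal to Fekete's lemma is not even needed: since $\hat\rho(\mathcal M)$ is defined as a $\limsup$, choosing $r\in(\hat\rho(\mathcal M),1)$ already gives $\sup_{A_1,\dotsc,A_k\in\mathcal M}\norm{A_k\dotsi A_1}\leq r^k$ for all sufficiently large $k$, which forces every trajectory to zero). Note that the paper does not prove this proposition itself; it cites Theys' thesis.

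The necessity direction, however, has a genuine gap at exactly the step you single out as the crux. Knowing that for every $k$ there is a product $A^{(k)}_k\dotsi A^{(k)}_1$ of norm close to $1$ does not let you stitch these witnesses into one bad infinite switching sequence by a K\"onig-type compactness argument alone. The obstruction is quantitative: if you extract a diagonal limit sequence $(B_1,B_2,\dotsc)$ whose every prefix $(B_1,\dotsc,B_j)$ is a limit of the rightmost $j$ factors of arbitrarily long products of norm at least $1$, the only lower bound available on the partial product is $\norm{B_j\dotsi B_1}\geq 1/\sup_{m}\sup\norm{A_m\dotsi A_1}$, and unless the whole product semigroup is uniformly bounded this is vacuous: the ``mass'' of a long product of norm $\geq 1$ may sit entirely in its leftmost factors, so the partial products of the limit sequence can still tend to zero. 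Proving that absolute asymptotic stability forces the product semigroup to be bounded is essentially the hard content of the theorem, so the argument is circular as sketched. The standard proof (the one in the cited reference) instead reduces by induction on common invariant subspaces to the irreducible case and there invokes a Barabanov extremal norm $\nnorm{\cdot}$ satisfying $\max_{A\in\mathcal M}\nnorm{Ax}=\hat\rho(\mathcal M)\,\nnorm{x}$; a greedy choice of $A_k$ then produces a trajectory with $\nnorm{x(k)}=\hat\rho(\mathcal M)^k\nnorm{x_0}$, which does not vanish when $\hat\rho(\mathcal M)\geq 1$. (One cannot shortcut via a finite product of spectral radius $\geq 1$ repeated periodically, since the finiteness conjecture is false.) A secondary, smaller gap: replacing $\mathcal M$ by its closure enlarges the set of admissible switching sequences, and transferring a bad infinite sequence from $\overline{\mathcal M}$ back to $\mathcal M$ by approximation is not controlled for infinite products.
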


\section{Converse Lyapunov Theorem} \label{sec:LyapunovTheorem}

The aim of this section is to show a converse Lyapunov theorem for the
switched system $\Sigma_\mu$.  Let us begin by defining  Lyapunov
functions for $\Sigma_\mu$. 
\begin{definition}
A continuous and positive definite function $V\colon \mathbb{R}^d \to
\mathbb{R}$ is said to be a {\it Lyapunov function} for $\Sigma_\mu$ if
there exists $0\leq \gamma<1$ such that 
\begin{equation} \label{eq:defn:LyapunovFunction}
E[V(Ax)] \leq \gamma V(x)
\end{equation}
for every $x\in\mathbb{R}^d$. 
\end{definition}

The next theorem is the main result of this section.
\begin{theorem}\label{thm:lyapu}
Assume that either
\begin{enumerate} \renewcommand{\theenumi}{\alph{enumi}}
\item $p$ is even or\label{item:peven}

\item $\supp \mu$ leaves a proper cone $K$ invariant and moreover $E[A^{\otimes
p}] >^{\tilde K_{p}} 0$. \label{item:invcone}
\end{enumerate}
Then $\Sigma_\mu$ is $p$th mean stable if and only if it admits a
homogeneous Lyapunov function of degree $p$. 
\end{theorem}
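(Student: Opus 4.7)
The plan is to handle the two directions separately; the easy direction (Lyapunov function $\Rightarrow$ stability) uses neither assumption (a) nor (b), while the converse direction is built on two distinct explicit constructions, one per hypothesis.

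For the sufficiency direction, I would exploit continuity, positive definiteness, and homogeneity of degree $p$ to obtain the two-sided estimate $c_1 \norm{x}^p \leq V(x) \leq c_2 \norm{x}^p$ via compactness of the unit sphere. Iterating $E[V(Ax)]\leq \gamma V(x)$ by the tower property together with the independence of the $A_k$ gives $E[V(x(k;x_0))] \leq \gamma^k V(x_0)$, and combining with the estimate yields the required exponential decay of $E[\norm{x(k;x_0)}^p]$.

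For the converse direction under hypothesis \iref{item:peven} (so $p=2q$), the plan is to reduce to the already known mean square case. By Lemma \ref{lemma:BasicProertiesOfPradius}, $\rho_{p,\mu} = [\rho_{2,\mu^{\otimes q}}]^{1/q}$, so the $p$th mean stability of $\Sigma_\mu$ is equivalent to the mean square stability of the lifted system with matrices $A^{\otimes q}$. Invoking the quadratic converse Lyapunov theorem from \cite{Ogura2012b} produces $P \succ 0$ and $0\leq\gamma<1$ with $E[(A^{\otimes q})^\top P A^{\otimes q}] \preceq \gamma P$. Pulling back, $V(x) := (x^{\otimes q})^\top P\, x^{\otimes q}$ is continuous, positive definite, homogeneous of degree $2q=p$, and inherits the decrease property directly, providing the LMI-based construction alluded to in the introduction.

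Under hypothesis \iref{item:invcone} the construction is eigenvalue based. Set $\lambda := \rho(E[A^{\otimes p}]) = \rho_{p,\mu}^p < 1$. Lemma \ref{lem:basic:M>^K0} applied to $E[A^{\otimes p}]$ and then to its transpose yields an eigenvector $f \in \interior \tilde K_p^*$ with $E[A^{\otimes p}]^\top f = \lambda f$. I would let $\norm{\cdot}_f$ be the cone linear absolute norm on $\mathbb{R}^{d^p}$ induced by $f$ and define $V(x) := \norm{x^{\otimes p}}_f$. Homogeneity of degree $p$, positive definiteness, and continuity are immediate from norm properties and from $x^{\otimes p}=0 \iff x=0$. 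For the Lyapunov inequality, given $x \in \mathbb{R}^d$ and $\epsilon>0$, use \eqref{eq:def:coneabsolute} together with the cone linear identity $\norm{u}_f = f^\top u$ on $\tilde K_p$ to pick $v,w \in \tilde K_p$ with $v-w = x^{\otimes p}$ and $f^\top(v+w) \leq V(x)+\epsilon$. Since each $A^{\otimes p}$ leaves $\tilde K_p$ invariant, $(A^{\otimes p}v, A^{\otimes p}w)$ is a valid cone decomposition of $A^{\otimes p}x^{\otimes p}$, so $\norm{A^{\otimes p}x^{\otimes p}}_f \leq f^\top A^{\otimes p}(v+w)$; taking expectations and using $f^\top E[A^{\otimes p}] = \lambda f^\top$ gives $E[V(Ax)] \leq \lambda(V(x)+\epsilon)$, and $\epsilon \to 0$ closes the argument.

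I expect the only real obstacle to lie in case \iref{item:invcone}: when $p$ is odd and $x\notin K$, the vector $x^{\otimes p}$ need not belong to $\tilde K_p$, so the cone linear representation of the norm is unavailable at $x^{\otimes p}$ itself. The cone absolute property is precisely the tool that overcomes this, decomposing $x^{\otimes p}$ into elements of $\tilde K_p$ on which the Perron structure of $E[A^{\otimes p}]$ can be applied; the minor technical wrinkle is working with approximately optimal decompositions so that we do not have to argue that the infimum in \eqref{eq:def:coneabsolute} is attained.
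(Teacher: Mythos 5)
Your proposal is correct and follows essentially the same route as the paper: the same two-sided bound for sufficiency, the same reduction of the even case to the mean-square converse theorem via $\rho_{p,\mu}=[\rho_{2,\mu^{\otimes q}}]^{1/q}$ and $W(x)=V(x^{\otimes q})$, and the same cone-linear-absolute-norm construction with an $\epsilon$-optimal cone decomposition in case (b). The only (favorable) difference is that you carry out case (b) directly for general $p$ on $\tilde K_p$ with the left Perron eigenvector of $E[A^{\otimes p}]$, whereas the paper writes out only $p=1$ and defers the general case to the Kronecker lifting of Lemma~\ref{lemma:KronProdLeave}.
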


\begin{remark}
The assumptions in Theorem~\ref{thm:lyapu} are needed because  the
theorem relies on Proposition~\ref{prop:stabilitychar}. The relaxation
of those assumptions is left as an open problem. A possible approach can
be found in \cite{Jungers2011b}, where the authors proposes a method to
approximately compute $p$-radius without such assumptions. 
\end{remark}

It is straightforward to prove sufficiency. 

\begin{proofof}{sufficiency in Theorem~\ref{thm:lyapu}}
Assume that $\Sigma_\mu$ admits a homogeneous Lyapunov function of
degree $p$. Let $x_0 \in \mathbb{R}^d$ be arbitrary. Using induction we
can show $E[V(x(k;x_0))] \leq  \gamma^k V(x_0)$.  Since $V$ is
continuous, homogeneous with degree $p$, and positive definite, there
exist constants $C_1, C_2 > 0$ such that $C_1\norm{x}^p \leq V(x) \leq
C_2\norm{x}^p$ for every $x\in \mathbb{R}^d$. Therefore
$E[\norm{x(k;x_0)}^p] \leq (C_2/C_1) \gamma^k \norm{x_0}^p$.  Thus
$\Sigma_\mu$ is $p$th mean stable. 
\end{proofof}

The rest of this section is devoted for the proof of necessity. The
proof for the case \iref{item:peven} depends on its special case $p=2$,
which is proved in \cite{Ogura2012b} and is quoted as the next
proposition for ease of reference. 
\begin{proposition}[\cite{Ogura2012b}] \label{prop:squaremeanLyapunov}
$\Sigma_\mu$ is mean square stable if and only if it admits a quadratic
Lyapunov function of the form~$x^\top Hx$, where $H$ is a positive
definite matrix. Moreover such an $H$ can be obtained by solving a
linear matrix inequality.
\end{proposition}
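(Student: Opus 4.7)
The plan is to handle sufficiency and necessity separately, and then observe that necessity reduces to feasibility of a linear matrix inequality. Sufficiency mirrors the sufficiency half of Theorem~\ref{thm:lyapu}: from $E[V(Ax)]\le \gamma V(x)$ and the sandwich $\lambda_{\min}(H)\norm{x}^2\le V(x)\le \lambda_{\max}(H)\norm{x}^2$, an induction yields $E[\norm{x(k;x_0)}^2]\le(\lambda_{\max}(H)/\lambda_{\min}(H))\gamma^k\norm{x_0}^2$.

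For necessity, the idea is to recast the quadratic Lyapunov inequality as a spectral condition on $E[A^{\otimes 2}]$ via vectorization. Writing $E[V(Ax)] = x^\top E[A^\top H A]\,x$, I define the linear operator $\mathcal{L}\colon H\mapsto E[A^\top H A]$ on symmetric matrices. The standard identity $\vec(A^\top H A) = (A^\top\otimes A^\top)\vec(H)$ shows that in the $\vec$ basis $\mathcal{L}$ is represented by $E[(A\otimes A)^\top] = (E[A^{\otimes 2}])^\top$, which shares its spectral radius with $E[A^{\otimes 2}]$. Proposition~\ref{prop:stabilitychar} applied with $p=2$ then converts mean square stability of $\Sigma_\mu$ into $\rho(\mathcal{L}) = \rho(E[A^{\otimes 2}]) < 1$.

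With $\rho(\mathcal{L})<1$ in hand, I would take $H$ to be the solution of the discrete Lyapunov equation $H - \mathcal{L}(H) = I$. The Neumann series $H=\sum_{k=0}^\infty \mathcal{L}^k(I)$ converges, and each summand is positive semidefinite, so $H\succeq I\succ 0$. Then $\mathcal{L}(H) = H-I\preceq H - \lambda_{\max}(H)^{-1}H = \gamma H$ with $\gamma:=1-\lambda_{\max}(H)^{-1}\in[0,1)$, delivering the required Lyapunov function $V(x)=x^\top Hx$. The LMI claim follows immediately: finding $H$ amounts to solving the strict linear matrix inequalities $H\succ 0$ and $H - E[A^\top H A]\succ 0$ in the symmetric unknown $H$, and the construction above certifies feasibility precisely when $\Sigma_\mu$ is mean square stable.

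The main subtlety to watch is that $\mathcal{L}$ restricted to symmetric matrices could in principle have strictly smaller spectral radius than its extension to all of $\mathbb{R}^{d\times d}$; but $\mathcal{L}$ clearly preserves the symmetric/antisymmetric decomposition, so its spectral radius on the symmetric subspace is bounded above by $\rho(E[A^{\otimes 2}])<1$, which is exactly what the Neumann-series argument needs. A secondary point is to ensure the positivity of $H$: this is automatic from $H\succeq I$ because $I$ is the first term of the series and every subsequent term $\mathcal{L}^k(I)$ is positive semidefinite.
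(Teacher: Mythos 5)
Your proof is correct. The paper states Proposition~\ref{prop:squaremeanLyapunov} without proof, quoting it from \cite{Ogura2012b} for ease of reference; your argument --- reducing mean square stability to $\rho(E[A^{\otimes 2}])<1$ via Proposition~\ref{prop:stabilitychar}, then building $H=\sum_{k\ge 0}\mathcal{L}^k(I)$ from the discrete Lyapunov equation and reading off the LMI $H\succ 0$, $H-E[A^\top H A]\succ 0$ --- is the standard construction used in that reference, and all the steps (invariance of the symmetric subspace, positivity of the Neumann summands, the choice $\gamma=1-\lambda_{\max}(H)^{-1}$) check out.
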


To prove the second case \iref{item:invcone} we will need the next
proposition. 
\begin{proposition}\label{prop:rho(M)}
Let $K \subset \mathbb{R}^d$ be a proper cone and assume that $M >^K 0$.
Then there exists $f \in \interior{(K^*)}$ that induces the cone linear
absolute norm $\norm{\cdot}_f$ such that  $\norm{M}_f = \rho(M)$. 
\end{proposition}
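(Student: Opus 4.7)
The plan is to construct $f$ as the Perron eigenvector of $M^\top$ and then verify $\norm{M}_f=\rho(M)$ by a single direct computation on the invariant cone.

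First, I would apply Lemma~\ref{lem:basic:M>^K0}\iref{item:peven}'s companion, namely part~(2), to conclude that since $M>^K 0$, the transpose $M^\top$ is $K^*$-positive with respect to the proper cone $K^*$. Applying part~(1) of the same lemma to $M^\top$, I obtain a simple eigenvalue $\rho(M^\top)=\rho(M)$ of $M^\top$ whose eigenvector $f$ lies in $\interior(K^*)$, i.e.,
\begin{equation*}
M^\top f = \rho(M)\, f, \qquad f \in \interior(K^*).
\end{equation*}
Since $f\in\interior(K^*)$, the construction recalled from \cite{Seidman2005} produces the cone linear absolute norm $\norm{\cdot}_f$ determined by \eqref{eq:def:conelinear} and \eqref{eq:def:coneabsolute}, whose restriction to $K$ is simply $\norm{x}_f=f^\top x$.

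Next, I would compute the induced norm $\norm{M}_f$. By Lemma~\ref{lemma:ProdMonotone}(1), it suffices to take the supremum of $\norm{Mx}_f/\norm{x}_f$ over $x\in K\setminus\{0\}$ (this is well defined, because $f\in\interior(K^*)$ ensures $f^\top x>0$ for all nonzero $x\in K$). For any such $x$, invariance gives $Mx\in K$, so the cone linearity of $\norm{\cdot}_f$ and the eigenvector identity yield
\begin{equation*}
\norm{Mx}_f \;=\; f^\top (Mx) \;=\; (M^\top f)^\top x \;=\; \rho(M)\, f^\top x \;=\; \rho(M)\norm{x}_f.
\end{equation*}
Hence the ratio is identically $\rho(M)$ on $K\setminus\{0\}$, so $\norm{M}_f=\rho(M)$ by Lemma~\ref{lemma:ProdMonotone}(1).

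There is essentially no obstacle beyond recognizing that Lemma~\ref{lem:basic:M>^K0} gives both the existence of the dual Perron eigenvector in $\interior(K^*)$ and the fact that $\rho(M^\top)=\rho(M)$ is the relevant eigenvalue, and that Lemma~\ref{lemma:ProdMonotone}(1) lets us restrict the supremum defining the induced norm to the cone $K$, where the eigenvector identity does all the work.
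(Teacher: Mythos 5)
Your proof is correct and follows essentially the same approach as the paper: both identify $f$ as the left Perron eigenvector of $M$ lying in $\interior(K^*)$ and then verify $\norm{M}_f=\rho(M)$ by the one-line computation $f^\top Mx=\rho(M)f^\top x$ on $K$ together with \eqref{eq:normMshort}. The only (cosmetic) difference is that you obtain $f$ directly by applying Lemma~\ref{lem:basic:M>^K0} to $M^\top$ and $K^*$, whereas the paper extracts it from the Jordan form of $M$ before invoking the same lemma to place it in $\interior(K^*)$; your route is, if anything, slightly cleaner.
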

\begin{proof}
By Lemma~\ref{lem:basic:M>^K0} the matrix~$M$ admits the Jordan
canonical form  $J = V^{-1} M V$ where $V\in \mathbb{R}^{d\times d}$ is
an invertible matrix whose columns are the generalized eigenvectors of
$M$ and $J \in \mathbb{R}^{d\times d}$ is of the form 
\begin{equation*}
J = \begin{bmatrix}
J_0 &  0 \\
0   & \rho(A)
\end{bmatrix}
\end{equation*}
for some upper diagonal matrix $J_0$. Define $f \in \mathbb{R}^d$ by 
\begin{equation*} 
V^{-1} = \begin{bmatrix}
*\\f^\top
\end{bmatrix}. 
\end{equation*}
We can easily see that $f$ is an eigenvector of $M^\top$ corresponding
to the eigenvalue $\rho(M)$. Since $K^*$ is proper,  Lemma
\ref{lem:basic:M>^K0} shows $f \in \interior(K^*)$. Thus $f$ gives a
cone linear absolute norm with respect to $K$. Since for every $x\in K$
we have 
\begin{equation*}
\norm{Mx}_f = f^\top Mx = \rho(M) f^\top x = \rho(M) \norm{x}_f, 
\end{equation*}
the equation \eqref{eq:normMshort} immediately shows $\norm{M}_f =
\rho(M)$.
\end{proof}

Now we can complete the proof of Theorem~\ref{thm:lyapu}. 

\begin{proofof}{necessity in Theorem~\ref{thm:lyapu}}
First consider the case \iref{item:peven}. Assume that $\Sigma_\mu$ is
$p$th mean stable for $p = 2q$ where $q$ is a positive integer.  Then
Proposition~\ref{prop:stabilitychar} gives $\rho_{p, \mu} < 1$ and 
therefore $\rho_{2,\mu^{\otimes q}} < 1$ by
\eqref{eq:connects_p_and_p/k}. Hence, by Proposition
\ref{prop:squaremeanLyapunov}, $\Sigma_{\mu^{\otimes q}}$ admits a
homogeneous Lyapunov function $V$ of degree~$2$. Now define $W \colon
\mathbb{R}^d \to \mathbb{R}$ by $W(x) := V(x^{\otimes q})$. Since $V$ is
a Lyapunov function and is homogeneous of degree 2, $W$ is continuous,
positive definite, and is homogeneous of degree $2q = p$. Moreover, by
\eqref{eq:ABotimesp}, if $B$ follows $\mu^{\otimes q}$ then
\begin{equation*} 
\begin{aligned}
E[W(Ax)]
&=
E[V(A^{\otimes q}x^{\otimes q})]
\\
&=
E[V(B x^{\otimes q})]
\\
&\leq
\gamma V(x^{\otimes q})
\\
&=
\gamma W(x)
\end{aligned}
\end{equation*}
for some $\gamma < 1$ since $V$ is a Lyapunov function for
$\Sigma_{\mu^{\otimes q}}$. This shows that $W$ is a Lyapunov function
for $\Sigma_\mu$. 

Then let us turn to the second case~\iref{item:invcone}. We only
consider the special case $p=1$. Notice that $\tilde K_1 = K$. Assume
that $\supp \mu$ leaves a proper cone $K$ invariant and $E[A] >^K 0$.
If $\Sigma_\mu$ is (first) mean stable then $\gamma :=\rho(E[A]) < 1$ by
Proposition~\ref{prop:stabilitychar}. Also, by
Proposition~\ref{prop:rho(M)}, there exists a cone linear absolute norm
$\norm{\cdot}$ on $\mathbb{R}^{d}$ such that $\norm{E[A]} = \gamma$. Now
we define  $V(x) := \norm{x}$. We need to show
\eqref{eq:defn:LyapunovFunction}. Let $x\in\mathbb{R}^d$ and
$\epsilon>0$ be arbitrary. Since $\norm{\cdot}$ is cone absolute there
exist $x_1, x_2\in K$ such that $x=x_1-x_2$ and $\norm{x_1+x_2} \leq 
\norm{x} +\epsilon$. Notice that, by the linearity of $\norm{\cdot}$ on
$K$, we have $E[\norm{Ax_i}] = \norm{E[A]x_i} \leq \gamma \norm{x_i}$.
Therefore 
\begin{equation*}
\begin{aligned}
E[V(Ax)]
&= 
E[\norm{Ax_1 - Ax_2}]
\\
&\leq
E[\norm{Ax_1}] + E[\norm{Ax_2}]
\\
&\leq
\gamma (\norm{x_1} + \norm{x_2})
\\
&
\leq
\gamma(\norm{x} +\epsilon)
\end{aligned}
\end{equation*}
again by the linearity of $\norm{\cdot}$ on $K$.  Since $\epsilon>0$ was
arbitrary the inequality \eqref{eq:defn:LyapunovFunction} actually holds
and this finishes the proof for $p=1$. The case for a general $p$ can be
proved in the same way as the first half of this proof with
Lemma~\ref{lemma:KronProdLeave}. 
\end{proofof}

\begin{example}\label{ex:lyapu}
Consider the probability distribution 
\begin{equation*}
\mu = \begin{bmatrix}
[0, 1.5]& [0,1.8]\\
[0,0.15]&[0,1.2]
\end{bmatrix}
\end{equation*}
where each closed interval indicate that the corresponding element
of~$\mu$ is the uniform distribution on the interval. Clearly $\supp \mu$
leaves the proper cone~$\mathbb{R}^{2}_+$ invariant. Also the mean
$E[A]$ has only positive entries so that it is
$\mathbb{R}^{2}_+$-positive. Therefore Theorem~\ref{thm:lyapu} shows
that $\Sigma_\mu$ admits a homogeneous Lyapunov function of degree $1$. 
From the proof of the theorem, the Lyapunov function is given as a cone
linear absolute norm $\norm{x}_f$ and we can obtain the $f$ by following
the proof of
Proposition~\ref{prop:rho(M)} as $f = \begin{bmatrix}0.3838&1\end{bmatrix}^\top$.

We generate $200$ sample paths of  $\Sigma_\mu$ with the initial
state~$x_0 = \bm{0&1}^\top$. Fig.~\ref{fig:Lyapunov} shows the sample
means of the Lyapunov function $\norm{x(k)}_f$ and the Euclidean norm
$\norm{x(k)}$. While the sample mean of our Lyapunov function is almost
decreasing, that of the Euclidean norm shows oscillation.
Fig.~\ref{fig:Level} shows the average of the sample paths and the
contour plot of our Lyapunov function and the Euclidean norm. 
\begin{figure}[tb]
\begin{center}
\includegraphics[scale=\myscale]{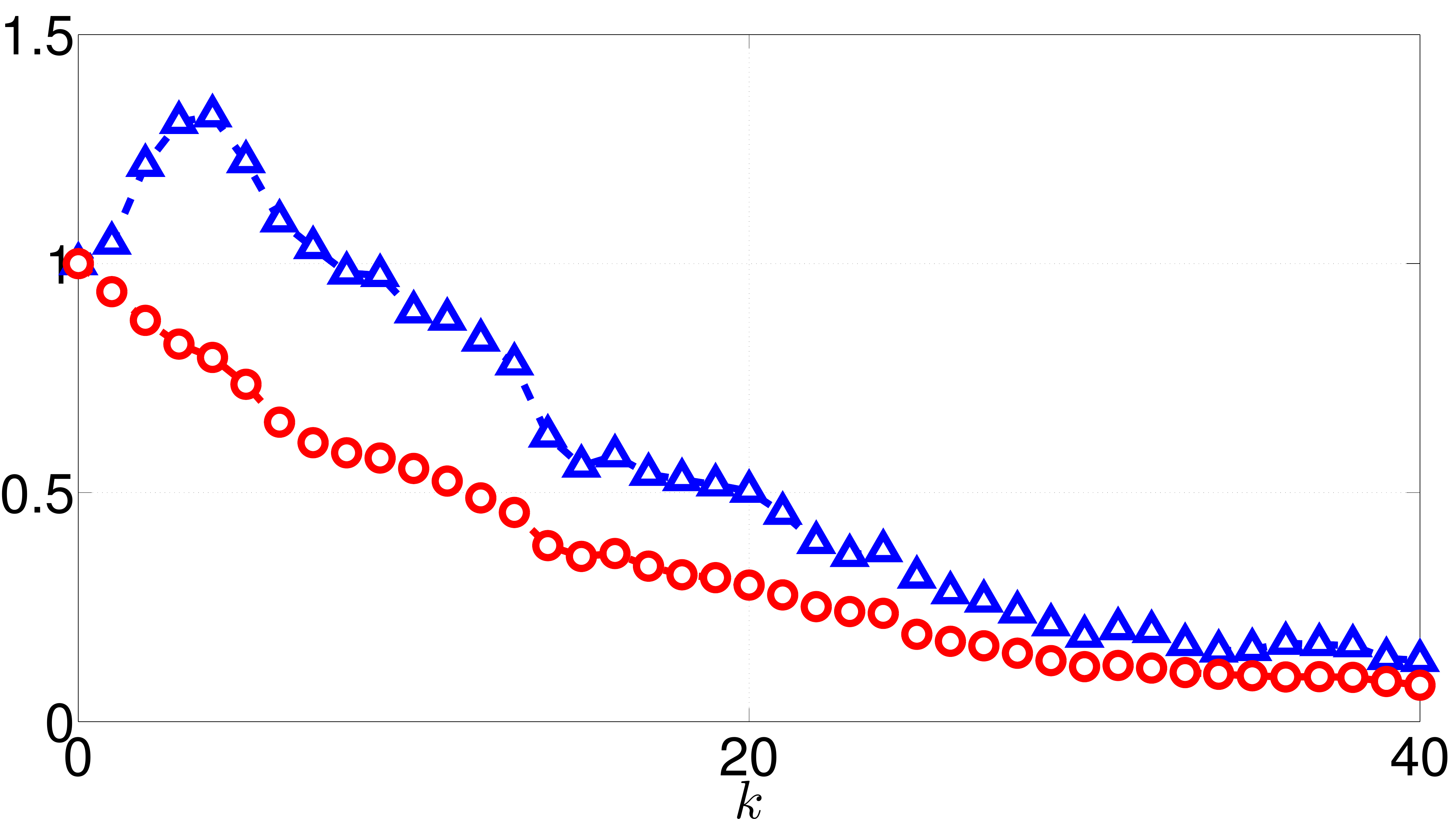}  \caption{The
sample means of the Lyapunov function (circle) and the Euclidean norm
(triangle)} \label{fig:Lyapunov}
\end{center}
\end{figure}

\begin{figure}[tb]
\begin{center}
\includegraphics[scale=\myscale]{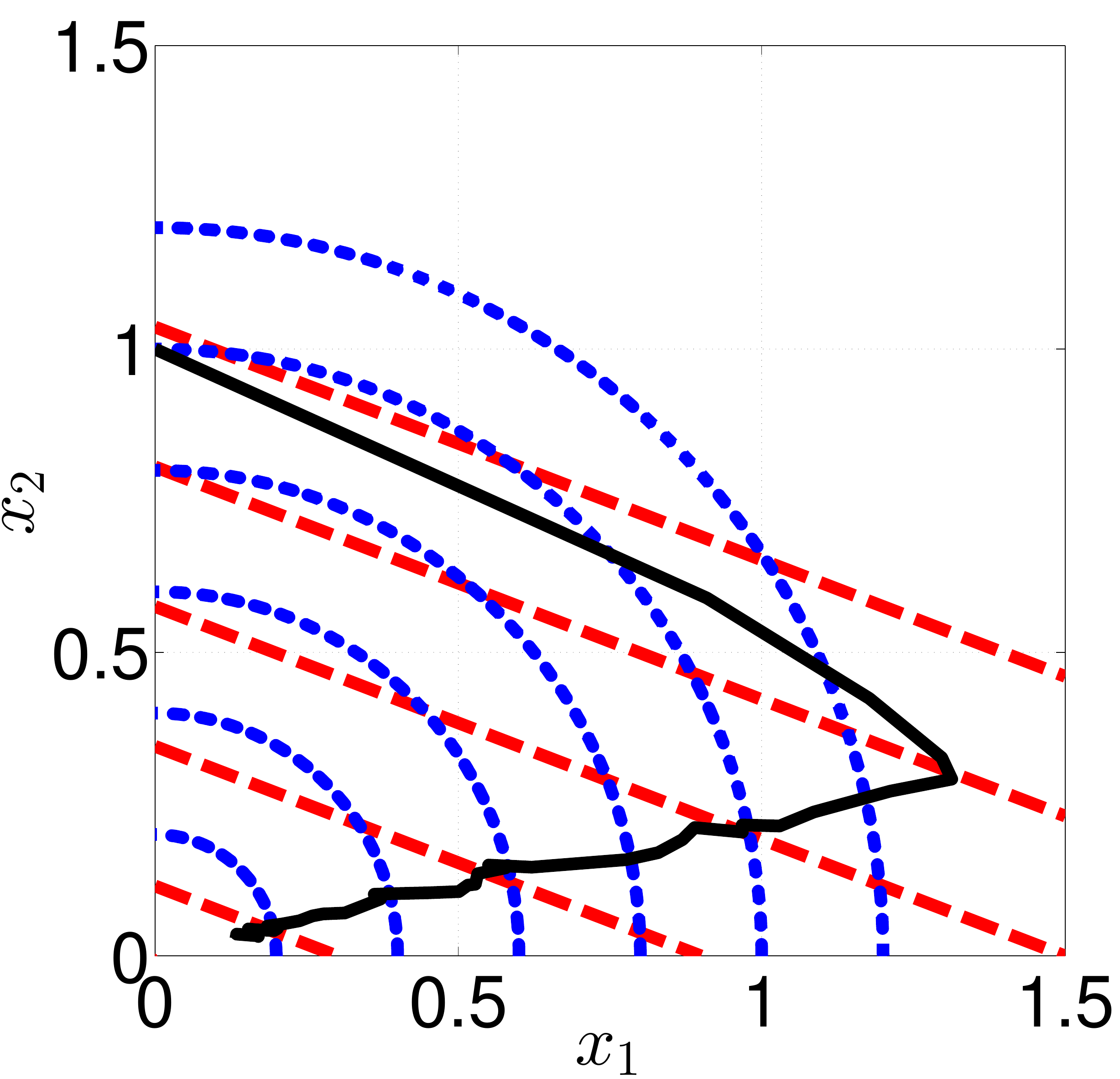}  \caption{The averaged
sample path (solid) and the level plots of the Lyapunov function
(dashed) and the Euclidean norm (dotted)} \label{fig:Level}
\end{center}
\end{figure}

\end{example}

\begin{remark}
Since $\supp \mu$ is infinite in Example~\ref{ex:lyapu}, we
cannot use the methods to construct a Lyapunov function for positive
systems proposed in the literature (see, e.g., \cite{Doan2013}).
\end{remark}

\section{Limiting Behavior of $p$th Mean Stability}\label{sec:Limit}

This section studies the limiting behavior of $p$th mean stability as
$p\to \infty$.  We start with the following observation. Let $\mu$ be a
probability distribution on $\mathbb{R}^{d\times d}$ and let $\mathcal M
= \supp \mu$.  Then the definitions of $p$-radius and joint spectral
radius show $\rho_{p,\mu} \leq \hat \rho(\mathcal M)$. Since  $\rho_{p,
\mu}$ is non-decreasing with respect to $p$ by
Lemma~\ref{lemma:BasicProertiesOfPradius} we have 
\begin{equation}\label{eq:limiting leq}
\lim_{p\to\infty}\rho_{p, \mu} \leq \hat \rho (\mathcal M). 
\end{equation}
It is then natural to ask when the equality holds in this inequality. We
will show that the equality still holds under the following assumption,
which is weaker than the one in \cite{Blondel2005}. 

\begin{assumption}\label{assm:onmu}
\ 
\begin{enumerate} \renewcommand{\theenumi}{\alph{enumi}}
\item \label{item:limiting:invariant} $\mathcal M$ leaves a proper cone
invariant;

\item \label{item:decomp_nu}  The singular part $\mu_s$ of $\mu$
consists of only point measures, i.e., $\mu_s = \sum_{i=1}^N p_i
\delta_{M_i}$ for some positive numbers $p_1, \dotsc, p_N$ and matrices
$M_1, \dotsc, M_N$. 
\end{enumerate}
\end{assumption}

The next theorem is the main result of this section. 
\begin{theorem}\label{theorem:limiting}
If $\mu$ satisfies Assumption~\ref{assm:onmu} then
\begin{equation*}
\lim_{p\to\infty}\rho_{p, \mu} 
=
\hat \rho(\mathcal M). 
\end{equation*}
\afterequation
\end{theorem}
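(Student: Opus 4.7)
Since~\eqref{eq:limiting leq} already gives $\lim_{p\to\infty}\rho_{p,\mu}\leq\hat\rho(\mathcal M)$, only the reverse inequality needs attention. My plan is to lift the spectral-radius characterization of Proposition~\ref{prop:stabilitychar} to the level of $k$-fold products and then exploit cone positivity via a Collatz--Wielandt argument inside $\tilde K_p$. Let $\mu^{*k}$ denote the distribution of $A_k\cdots A_1$ with the $A_i$ i.i.d.~$\sim\mu$. Since $\supp\mu^{*k}\subset\mathcal M^{*k}$ still leaves $K$ invariant, and a direct computation from~\eqref{eq:def:pthradius} gives $\rho_{p,\mu^{*k}}=\rho_{p,\mu}^k$, applying Proposition~\ref{prop:stabilitychar} to $\mu^{*k}$ yields
\begin{equation*}
\rho_{p,\mu}^{pk}=\rho\!\left(E\!\left[(A_k\cdots A_1)^{\otimes p}\right]\right).
\end{equation*}

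Given $\epsilon>0$, the Berger--Wang identity (applicable since $\mathcal M$ is compact) furnishes $k$ and $B_1,\dotsc,B_k\in\mathcal M$ with $\rho(C)^{1/k}\geq\hat\rho(\mathcal M)-\epsilon$, where $C:=B_k\cdots B_1$. Collatz--Wielandt then gives $v_0\in K\setminus\{0\}$ with $Cv_0\geq^K(1-\eta)\rho(C)\,v_0$ for any prescribed $\eta>0$, and a small interior perturbation places $v_0\in\interior K$ so the ensuing continuity step is robust. By continuity of the matrix product, there are neighborhoods $U_i\subset\mathbb{R}^{d\times d}$ of the $B_i$ such that $A_k\cdots A_1\,v_0\geq^K(1-\eta')\rho(C)\,v_0$ whenever $A_i\in U_i$ for every $i$. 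Since $B_i\in\supp\mu$ one has $\mu(U_i)>0$, whence $\Pr[\mathcal{E}]>0$ for $\mathcal{E}:=\{A_i\in U_i\text{ for all }i\}$ by independence.

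The telescoping identity $a^{\otimes p}-b^{\otimes p}=\sum_{i=0}^{p-1}a^{\otimes i}\otimes(a-b)\otimes b^{\otimes(p-1-i)}$ shows that $a\geq^K b\geq^K 0$ entails $a^{\otimes p}\geq^{\tilde K_p}b^{\otimes p}$, so on $\mathcal{E}$ we have $(A_k\cdots A_1\,v_0)^{\otimes p}\geq^{\tilde K_p}(1-\eta')^p\rho(C)^p\,v_0^{\otimes p}$. Integrating and discarding the nonnegative contribution from $\mathcal{E}^c$ yields
\begin{equation*}
E\!\left[(A_k\cdots A_1)^{\otimes p}\right]v_0^{\otimes p}\geq^{\tilde K_p}\Pr[\mathcal{E}]\,(1-\eta')^p\rho(C)^p\,v_0^{\otimes p},
\end{equation*}
and Collatz--Wielandt inside $\tilde K_p$ delivers $\rho_{p,\mu}\geq\Pr[\mathcal{E}]^{1/(pk)}(1-\eta')^{1/k}\rho(C)^{1/k}$. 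Letting first $p\to\infty$ (so $\Pr[\mathcal{E}]^{1/(pk)}\to 1$ because $\Pr[\mathcal{E}]$ does not depend on $p$), then $\eta,\eta'\to 0$, and finally $\epsilon\to 0$ finishes the argument.

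The main technical hurdle I anticipate is arranging $v_0\in\interior K$ without forfeiting the Collatz--Wielandt estimate, since if $C$ is not strictly $K$-positive its Perron eigenvector may live on $\partial K$; a workaround will likely approximate $C$ by strictly $K$-positive matrices (e.g., $C+\delta P$ with $P>^K 0$) and pass to the limit $\delta\to 0$. I also need to pinpoint the role of Assumption~\ref{assm:onmu}\iref{item:decomp_nu}, which I suspect enters either to sharpen the probability lower bound on $\mathcal{E}$ via atomic events when some $B_i$ coincides with a Dirac mass $M_j$ of~$\mu_s$, or to rule out singular-continuous pathologies in $\supp\mu$ that could obstruct the continuity-plus-positivity step.
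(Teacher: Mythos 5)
Your overall architecture is genuinely different from the paper's and the one-block reduction (bounding $\rho\bigl(E[(A_k\cdots A_1)^{\otimes p}]\bigr)=\rho_{p,\mu}^{pk}$ from below via Berger--Wang and a Collatz--Wielandt vector) is an attractive idea, but as written the argument has two real gaps. First, the tensor-monotonicity claim is false for the cone $\tilde K_p=\overline{\conv(K^{\otimes p})}$ used in the paper, because $K^{\otimes p}$ consists only of $p$-th Kronecker powers $x^{\otimes p}$, not of general elementary tensors. Take $K=\mathbb{R}^2_+$, $p=2$, $a=(1,1)^\top\geq^K b=(1,0)^\top$: then $a^{\otimes 2}-b^{\otimes 2}=(0,1,1,1)^\top$, while every element of $\conv(K^{\otimes 2})$ whose first coordinate vanishes must also have vanishing second coordinate; so $a^{\otimes 2}\not\geq^{\tilde K_2}b^{\otimes 2}$. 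The step is salvageable by replacing $\tilde K_p$ with the larger proper cone generated by all tensors $u_1\otimes\cdots\otimes u_p$ with $u_i\in K$ (which $E[(A_k\cdots A_1)^{\otimes p}]$ also leaves invariant), but you must make that substitution explicitly and re-justify the Collatz--Wielandt lower bound there. Second, and more seriously, the continuity-plus-positive-probability step needs $v_0\in\interior K$: if $v_0\in\partial K$ (e.g.\ $C=\diag(1,0)$ on $\mathbb{R}^2_+$), then $A_k\cdots A_1v_0\geq^K(1-\eta')\rho(C)v_0$ can fail for arbitrarily small perturbations of the $B_i$, and conversely pushing $v_0$ into $\interior K$ destroys $Cv_0\geq^K(1-\eta)\rho(C)v_0$. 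Your proposed fix $C\mapsto C+\delta P$ does not obviously close this, since $C$ is pinned down as a product of elements of $\mathcal M$ and the perturbed eigenvector may approach $\partial K$ faster than $\delta\to 0$. You correctly flag this as the main hurdle, but it is left unresolved, so the proof is incomplete.

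A related warning sign is that your argument never uses Assumption~\ref{assm:onmu}\iref{item:decomp_nu}. In the paper that hypothesis is essential: the actual proof does not pass through Berger--Wang or eigenvectors at all. It takes an extremal sequence $\{M_k\}\subset\mathcal M$ with $\norm{M_k\cdots M_1}>C\gamma^k$ infinitely often (in a cone linear absolute norm), and uses Lemma~\ref{lemma:proplowerbound} --- whose proof is exactly where \iref{item:decomp_nu} enters --- to get a lower bound $\delta>0$, \emph{uniform over all} $M\in\mathcal M$, on $\mu(\{X:X\geq^K(1-\epsilon)M\})$. Restricting the expectation to the event $\{A_i\geq^K(1-\epsilon)M_i,\ i\leq k\}$ and invoking the product monotonicity of Lemma~\ref{lemma:ProdMonotone} gives $E[\norm{A_k\cdots A_1}^p]\geq\delta^kC^p\gamma_1^{pk}$, and the geometric probability cost $\delta^k$ is absorbed as $\delta^{1/p}\to1$ when $p\to\infty$. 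The uniformity in $M$ is needed precisely because infinitely many factors are chained; your one-block trick avoids that chaining (only $\mu(U_i)>0$ is needed, which is automatic for support points), which is why \iref{item:decomp_nu} disappears --- but until the two gaps above are closed, you cannot conclude that the hypothesis is dispensable. Either repair the interior-eigenvector step or adopt the paper's more elementary route through the uniform probability bound.
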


This theorem has two corollaries, both of which can be proved easily
using Proposition~\ref{prop:stabilitychar}.  The first one shows a novel
relationship between the stability of the deterministic switched system
$\Sigma_{\mathcal M}$ and the stochastic switched system $\Sigma_\mu$. 
\begin{corollary} 
Assume that $\mu$ satisfies Assumption~\ref{assm:onmu}. Then 
$\Sigma_{\mathcal M}$ is absolutely asymptotically stable if and only if
there exists $\gamma<1$ such that $\Sigma_\mu$ is $p$th mean stable with
the decay rate at most $\gamma$; i.e., the expectation
$E[\norm{x(k)}^p]^{1/p}$ is of order $O(\gamma^{k})$ for every $p$. 
\end{corollary}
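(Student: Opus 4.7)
The plan is to translate the statement through the $p$-radius, using Theorem~\ref{theorem:limiting} as the bridge between $\rho_{p,\mu}$ and $\hat\rho(\mathcal M)$ and Proposition~\ref{prop:AAS:characterization} to characterize absolute asymptotic stability. The corollary should reduce to the equivalence of: (i) $\hat\rho(\mathcal M) < 1$; (ii) there exists $\gamma < 1$ such that $\rho_{p,\mu} \leq \gamma$ for every $p$; and (iii) the uniform decay $E[\norm{x(k;x_0)}^p]^{1/p} = O(\gamma^k)\norm{x_0}$ holds for every $p$ with a common $\gamma < 1$.

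For the forward implication, I would begin with $\hat\rho(\mathcal M) < 1$ supplied by Proposition~\ref{prop:AAS:characterization}, pick $\gamma$ strictly between $\hat\rho(\mathcal M)$ and $1$, and invoke Lemma~\ref{lemma:BasicProertiesOfPradius}(1) together with Theorem~\ref{theorem:limiting} to conclude $\rho_{p,\mu} \leq \hat\rho(\mathcal M) < \gamma$ for every $p$. The defining limit \eqref{eq:def:pthradius} then furnishes, for each $p$, a constant $C_p$ with $E[\norm{A_{k-1}\dotsi A_0}^p]^{1/p} \leq C_p \gamma^k$ for all $k$; bounding $\norm{x(k;x_0)} \leq \norm{A_{k-1}\dotsi A_0}\,\norm{x_0}$ gives the desired uniform decay.

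For the converse, I would start from the assumed uniform decay and upgrade it to a bound on the operator-norm moment. This is accomplished by fixing a basis $e_1, \dotsc, e_d$ and using the standard inequality $\norm{M} \leq C \max_i \norm{Me_i}$, which yields
\begin{equation*}
E[\norm{A_{k-1}\dotsi A_0}^p]^{1/p} \leq C\, d^{1/p} \sup_{\norm{x_0}=1} E[\norm{x(k;x_0)}^p]^{1/p} = O(d^{1/p}\gamma^k).
\end{equation*}
Taking $(1/k)$-th roots and sending $k \to \infty$ in \eqref{eq:def:pthradius} makes the dimensional factor disappear and gives $\rho_{p,\mu} \leq \gamma$ for every $p$. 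Letting $p\to\infty$ and invoking Theorem~\ref{theorem:limiting} yields $\hat\rho(\mathcal M) \leq \gamma < 1$, so that Proposition~\ref{prop:AAS:characterization} delivers absolute asymptotic stability.

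The main obstacle is the comparison in the converse step between the pointwise decay $E[\norm{x(k;x_0)}^p]^{1/p}$ and the operator-norm moment $E[\norm{A_{k-1}\dotsi A_0}^p]^{1/p}$, since a priori one only controls the former; however this is a routine linear-algebra manipulation whose dimension-dependent constants vanish when passing to the exponential rate, so the substantive content really is that already supplied by Theorem~\ref{theorem:limiting}.
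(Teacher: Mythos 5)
Your argument is correct and follows the route the paper intends: the paper gives no explicit proof, saying only that the corollary follows easily from Proposition~\ref{prop:stabilitychar} together with Theorem~\ref{theorem:limiting}, and your chain (absolute asymptotic stability $\Leftrightarrow$ $\hat\rho(\mathcal M)<1$ $\Leftrightarrow$ $\sup_p \rho_{p,\mu}\leq\gamma<1$ via monotonicity and the limit theorem $\Leftrightarrow$ uniform decay via the defining limit of the $p$-radius) is exactly that route. The only substantive point beyond the paper's one-line remark is the uniformity of $\gamma$ over $p$ and the passage between $E[\norm{x(k;x_0)}^p]$ and $E[\norm{A_{k-1}\dotsi A_0}^p]$, both of which you handle correctly.
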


The second corollary gives an expression of joint spectral radius. 
\begin{corollary} 
If $\mu$ satisfies Assumption~\ref{assm:onmu} then
\begin{equation*}
\hat \rho(\mathcal M)
=
\lim_{p\to\infty}\rho\left( E[A^{\otimes p}] \right)^{1/p}. 
\end{equation*}
\afterequation
\end{corollary}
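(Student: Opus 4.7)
The plan is to derive this corollary as an essentially immediate consequence of Theorem~\ref{theorem:limiting} combined with the spectral formula in Proposition~\ref{prop:stabilitychar}. The key observation is that Assumption~\ref{assm:onmu}\iref{item:limiting:invariant} directly supplies the hypothesis ``$\supp \mu$ leaves a proper cone invariant'' of Proposition~\ref{prop:stabilitychar}, so the second case of that proposition applies simultaneously to every positive integer $p$ (not just to the even ones).

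First I would invoke Proposition~\ref{prop:stabilitychar} under hypothesis \iref{item:invcone}, which is available for all $p$ by Assumption~\ref{assm:onmu}\iref{item:limiting:invariant}, to obtain the identity
\begin{equation*}
\rho_{p,\mu} = \rho\bigl(E[A^{\otimes p}]\bigr)^{1/p}
\end{equation*}
valid uniformly in $p$. Next I would take the limit $p\to\infty$ on both sides and apply Theorem~\ref{theorem:limiting}, whose hypothesis is exactly Assumption~\ref{assm:onmu}, to conclude
\begin{equation*}
\lim_{p\to\infty}\rho\bigl(E[A^{\otimes p}]\bigr)^{1/p}
= \lim_{p\to\infty}\rho_{p,\mu}
= \hat\rho(\mathcal M).
\end{equation*}

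There is really no substantive obstacle here; the corollary is a clean repackaging. The only point to be careful about is making sure that the spectral-radius formula of Proposition~\ref{prop:stabilitychar} is applied via the cone-invariance branch rather than the ``$p$ even'' branch, since we need the formula for \emph{every} $p$ in order to take the limit. Assumption~\ref{assm:onmu}\iref{item:decomp_nu} is not needed in this step, but it is inherited from Theorem~\ref{theorem:limiting} through the use of that theorem.
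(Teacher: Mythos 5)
Your proof is correct and follows exactly the route the paper intends: the paper states that both corollaries ``can be proved easily using Proposition~\ref{prop:stabilitychar},'' i.e., by combining the identity $\rho_{p,\mu}=\rho(E[A^{\otimes p}])^{1/p}$ (valid for all $p$ via the cone-invariance branch supplied by Assumption~\ref{assm:onmu}) with Theorem~\ref{theorem:limiting}. Your remark about which branch of the proposition must be invoked is a correct and worthwhile clarification.
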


To prove Theorem \ref{theorem:limiting} we need the next lemma. Its
proof is omitted due to limitations of space. 
\begin{lemma}\label{lemma:proplowerbound}
Assume that $\mu$ satisfies Assumption~\ref{assm:onmu}. Then, for every
$\epsilon>0$, there exists $\delta>0$ such that
\begin{equation*}
\mu(\{X : X \geq^K (1-\epsilon)M \}) \geq \delta
\end{equation*}
for every $M\in \mathcal M$. 
\end{lemma}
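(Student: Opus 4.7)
The plan is to exploit the Lebesgue decomposition granted by Assumption~\ref{assm:onmu}\iref{item:decomp_nu} together with compactness of $\mathcal{M}$. I would write $\mu = \mu_{ac} + \sum_{i=1}^N p_i\,\delta_{M_i}$, with $\mu_{ac}$ absolutely continuous with respect to Lebesgue measure on $\mathbb{R}^{d\times d}$, and set $E_M := \{X : X \geq^K (1-\epsilon)M\}$. The starting observation is that $M \in \mathcal{M}$ implies $M \geq^K 0$, hence $\epsilon M \geq^K 0$, so $M \in E_M$ for every $M \in \mathcal{M}$.

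The argument would then proceed in two stages. First I would prove pointwise positivity of a tighter auxiliary quantity: for each $M_0 \in \mathcal{M}$, fix $\epsilon' \in (0, \epsilon)$, set $E'_{M_0} := \{X : X \geq^K (1-\epsilon')M_0\}$, and show $\delta_{M_0} := \mu(E'_{M_0}) > 0$. When $M_0$ is one of the atoms $M_i$, the inclusion $M_0 \in E'_{M_0}$ gives $\delta_{M_0} \geq p_i$. When $M_0 \in \supp\mu_{ac}$, the set $E'_{M_0}$ contains a one-sided wedge $M_0 + \{N : N >^K 0\}$, and absolute continuity combined with $M_0 \in \supp\mu_{ac}$ forces positive $\mu_{ac}$-mass there (the boundary of the cone of $K$-invariant matrices being Lebesgue-null in $\mathbb{R}^{d\times d}$). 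Second, I would exhibit a neighborhood $U_{M_0}$ of $M_0$ on which $(1-\epsilon')M_0 \geq^K (1-\epsilon)M$, so that $E'_{M_0} \subset E_M$ and $\mu(E_M) \geq \delta_{M_0}$ for every $M \in U_{M_0} \cap \mathcal{M}$. A finite subcover of the compact set $\mathcal{M}$ then delivers the uniform $\delta := \min_j \delta_{M_0^{(j)}} > 0$.

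The main technical obstacle I anticipate is verifying the order inequality $(1-\epsilon')M_0 \geq^K (1-\epsilon)M$ on a uniform neighborhood when $M_0$ lies on the boundary of the cone of $K$-invariant matrices: decomposing
\[
(1-\epsilon')M_0 - (1-\epsilon)M = (\epsilon - \epsilon')M_0 - (1-\epsilon)(M - M_0),
\]
the slack $(\epsilon - \epsilon')M_0$ is itself a boundary element, so perturbations $M - M_0$ transverse to the relevant tangent directions can push the expression out of the invariance cone. To circumvent this I would replace the witness by an interior translate $E'' := \{X : X \geq^K (1-\epsilon')M_0 + \eta Z_0\}$ with $Z_0 >^K 0$ fixed and $\eta > 0$ sufficiently small; the new slack $(\epsilon - \epsilon')M_0 + \eta Z_0$ is then strictly $K$-positive, rendering the order inequality robust under small perturbations of $M$, while $\mu(E'')$ remains strictly positive by the same absolute-continuity argument (or, in the atomic case, directly from an atomic mass, provided $\eta$ is chosen small enough that the relevant atom still lies in $E''$).
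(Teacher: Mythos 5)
The paper omits its own proof of this lemma, so I can only judge your argument on its merits, and it has a genuine gap at its core: the positivity of $\mu(E'_{M_0})$ for non-atomic $M_0$. You argue that $E'_{M_0}$ contains the wedge $M_0+\{N:N>^K 0\}$ and that ``absolute continuity combined with $M_0\in\supp\mu_{ac}$ forces positive $\mu_{ac}$-mass there.'' This inference is false: membership in the support guarantees positive mass for every \emph{neighborhood} of $M_0$, not for every open set that merely has $M_0$ on its boundary --- and $M_0$ is a boundary point of $M_0+\mathcal{K}$, where $\mathcal{K}$ denotes the cone of $K$-invariant matrices, because $\mathcal{K}$ is pointed and hence $0\notin\interior\mathcal{K}$. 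Already in dimension one ($K=\mathbb{R}_+$, $\mu$ uniform on $[1,2]$, $M_0=2$) the wedge is $(2,\infty)$ and carries zero mass. The set $E'_{M_0}=[2(1-\epsilon'),\infty)$ does have positive mass there, but only because the slack $\epsilon'M_0$ pushes $(1-\epsilon')M_0+\mathcal{K}$ past $M_0$ so that $E'_{M_0}$ contains a one-sided neighborhood of $M_0$ inside $\supp\mu$; the wedge plays no role. To make the pointwise step correct one has to show that $E'_{M_0}$ contains $\mathcal{K}\cap B(M_0,r)$ for some $r>0$ (for $K=\mathbb{R}^d_+$ one can take $r=\epsilon'\min\{(M_0)_{ij}:(M_0)_{ij}>0\}$, since the constraints at zero entries of $M_0$ are vacuous on $\mathcal{K}$), but that radius degenerates as the smallest positive entry of $M_0$ shrinks over $\mathcal{M}$ --- which is precisely where the uniformity claimed by the lemma becomes nontrivial.

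Your proposed repair of the second step compounds the problem. Replacing the witness by $E''=\{X:X\geq^K(1-\epsilon')M_0+\eta Z_0\}$ with $Z_0>^K 0$ does make the comparison $(1-\epsilon')M_0+\eta Z_0\geq^K(1-\epsilon)M$ robust under perturbations of $M$, but it destroys the pointwise positivity you need: if $M_0$ lies on the boundary of $\mathcal{K}$ (e.g.\ an atom with a zero entry when $K=\mathbb{R}^d_+$), then $\epsilon'M_0-\eta Z_0\notin\mathcal{K}$ for \emph{every} $\eta>0$, so $M_0\notin E''$ and the atomic mass is lost entirely --- there is no ``small enough $\eta$''; and for $M_0\in\supp\mu_{ac}$ the set $E''$ is an even smaller wedge whose closure need not contain $M_0$, so the (already invalid) support argument cannot be rerun. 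The overall skeleton --- pointwise positivity plus a finite subcover of the compact set $\mathcal{M}$ --- is a reasonable one, but as written neither of its two key steps is established, and the boundary-of-$\mathcal{K}$ difficulty you correctly flagged remains unresolved.
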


Let us prove Theorem~\ref{theorem:limiting}. 

\begin{proofof}{Theorem~\ref{theorem:limiting}}
By Proposition~\ref{prop:stabilitychar} and the
inequality~\eqref{eq:limiting leq} it is sufficient to show
$\lim_{p\to\infty}\rho_{p, \mu} \geq \hat \rho (\mathcal M)$. Let
$\gamma :=\hat \rho (\mathcal M)$.  Let us take any cone linear absolute
norm $\norm{\cdot}$ with respect to the invariant cone. By
Proposition~\ref{prop:AAS:characterization}, we can show that there
exist $C>0$ and $\{M_k\}_{k=1}^\infty \subset \mathcal M$ such that
$\norm{M_k \dotsi M_1} > C\gamma^k$ for infinitely many $k$. 

Take arbitrary $\gamma_1$ and $\gamma_2$ such that
$\gamma>\gamma_1>\gamma_2$. Define $\epsilon :=
(\gamma-\gamma_1)/\gamma$ and take the corresponding $\delta>0$ given by
Lemma~\ref{lemma:proplowerbound}. Observe that, by
Lemma~\ref{lemma:ProdMonotone}, if $X_i \geq^K (1-\epsilon)M_i =
(\gamma_1/\gamma)M_i$ then $\norm{X_k \dotsi X_1}  \geq 
(\gamma_1/\gamma)^k  \norm{M_k\dotsi M_1} > C \gamma_1^k$.  Therefore, 
\begin{equation*}
\begin{aligned}
E[\norm{A_k\dotsi A_1}^p]
> &
\mu^k\left(\norm{A_k\dotsi A_1} > C\gamma_1^k\right)  \left(C \gamma_1^k\right)^p
\\	
\geq &
\left(\prod_{i=1}^k \mu(\{X_i : X_i \geq^K (1-\epsilon)M_i\})\right) C^p \gamma_1^{pk}
\\
\geq&
\delta^k C^p \gamma_1^{pk}
\end{aligned}
\end{equation*}
and hence we have $E[\norm{A_k\dotsi A_1}^p]^{1/kp} > C^{1/k}
\delta^{1/p} \gamma_1$. Choose a sufficiently large $p_0$ such that 
$\delta^{1/p_0}\gamma_1 > \gamma_2$. Then  $E[\norm{A_k\dotsi
A_1}^{p_0}]^{1/kp_0} > C^{1/k} \gamma_2$ for infinitely many $k$. This
implies $\rho_{p_0,\mu} \geq \gamma_2$ and therefore
$\lim_{p\to\infty}\rho_{p,\mu}\geq \gamma_2$. This completes the proof
since $\gamma_2$ can be made arbitrary close to $\gamma = \hat
\rho(\mathcal M)$. 
\end{proofof}

The next example gives a simple illustration of
Theorem~\ref{theorem:limiting}.
\begin{example}
Let $\mu$ be the uniform distribution on $[0,\gamma]$ for some $\gamma >
0$. We can see that $\mu$ satisfies  Assumption~\ref{assm:onmu}.  Also
clearly  $\hat{\rho}(\mathcal M) = \gamma$. On the other hand we have
$\rho_{p,\mu}^p = \rho(E[A^{\otimes p}]) = \gamma^p/(p+1)$. Therefore
$\rho_{p, \mu} = \gamma(p+1)^{-p}$ and hence $\lim_{p\to\infty} \rho_{p,
\mu} = \gamma = \hat \rho(\mathcal M)$, as expected. We remark that,
since $\supp \mu$ is infinite, we cannot apply the result in
\cite{Blondel2005} to this example. 
\end{example}

Finally the next theorem generalizes another characterization of joint
spectral radius in \cite{Xu2011},  which does not need the existence of
an invariant cone. 
\begin{theorem}
If $\mu$ satisfies the condition \iref{item:decomp_nu} of
Assumption~\ref{assm:onmu}  then
\begin{equation}\label{eq:limsup}
\hat \rho (\mathcal M) 
=
\limsup_{p\to\infty} \left( \rho( E[A^{\otimes p}] )\right)^{1/p}. 
\end{equation}
\afterequation
\end{theorem}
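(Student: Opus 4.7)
The plan is to establish both inequalities in $\limsup_{p\to\infty}(\rho(E[A^{\otimes p}]))^{1/p}=\hat\rho(\mathcal M)$. The $\leq$ direction is standard: independence and $(AB)^{\otimes p}=A^{\otimes p}B^{\otimes p}$ give $E[A^{\otimes p}]^n=E[(A_n\cdots A_1)^{\otimes p}]$, and a submultiplicative cross norm with $\|M^{\otimes p}\|\leq\|M\|^p$ then yields $\rho(E[A^{\otimes p}])^{1/p}\leq\rho_{p,\mu}\leq\hat\rho(\mathcal M)$ for every $p$.

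For the $\geq$ direction, I would use a trace-based argument rather than the cone-based route of Theorem~\ref{theorem:limiting}. The identity $\mathrm{tr}(M^{\otimes p})=(\mathrm{tr}\,M)^p$ together with independence yields $\mathrm{tr}(E[A^{\otimes p}]^n)=E[\mathrm{tr}(A_n\cdots A_1)^p]$, which is nonnegative for even $p$. Combining this with $\rho(M)\geq\mathrm{tr}(M)/\dim M$ (valid when $\mathrm{tr}(M)\geq 0$) gives
\[
\rho(E[A^{\otimes p}])^{1/p}\geq d^{-1/n}\,E[\mathrm{tr}(A_n\cdots A_1)^p]^{1/(pn)}
\]
for every $n$ and every even $p$. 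Fix $n$ and let $p\to\infty$ through even integers: by $L^p\to L^\infty$ convergence together with continuity of the trace and $\supp\mu^n=\mathcal M^n$, the right-hand side tends to $d^{-1/n}T_n^{1/n}$, where $T_n:=\sup_{A_i\in\mathcal M}|\mathrm{tr}(A_n\cdots A_1)|$. Hence $\limsup_p \rho(E[A^{\otimes p}])^{1/p}\geq d^{-1/n}T_n^{1/n}$ for every $n$.

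It remains to show $\limsup_n T_n^{1/n}=\hat\rho(\mathcal M)$. The upper bound follows from $T_n\leq d\sup_{A_i}\|A_n\cdots A_1\|$. For the matching lower bound, fix $\epsilon>0$; the Berger-Wang formula supplies $k$ and $B_1,\ldots,B_k\in\mathcal M$ with $\rho(P)>(\hat\rho(\mathcal M)-\epsilon)^k$ for $P:=B_k\cdots B_1$, and the spectral-radius-via-trace identity $\limsup_m|\mathrm{tr}(P^m)|^{1/m}=\rho(P)$ (obtained from the Jordan form of $P$) produces infinitely many $m$ for which $T_{km}\geq|\mathrm{tr}(P^m)|$ gives $T_{km}^{1/(km)}\geq\hat\rho(\mathcal M)-2\epsilon$; letting $\epsilon\to 0$ finishes the argument. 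The main technical step is the trace identity, whose proof requires controlling cancellation among dominant eigenvalues of $P$ of equal magnitude, typically via a Weyl-style equidistribution argument. Condition~(b) of Assumption~\ref{assm:onmu} does not appear to enter this trace-based proof explicitly; it would, however, be essential if one instead tried to lower-bound $\rho_{p,\mu}$ directly via point-mass sampling from $\mu_s$ in the spirit of Theorem~\ref{theorem:limiting}, since then the absence of cone-monotonicity must be compensated for by the discrete structure of the singular part.
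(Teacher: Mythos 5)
Your proposal is correct, but it proves the theorem by a genuinely different route than the paper. The paper's (sketched) proof applies the semidefinite lifting of \cite{Blondel2005} so that Theorem~\ref{theorem:limiting} can be invoked---the lifting manufactures the invariant cone required by condition~\iref{item:limiting:invariant}, while condition~\iref{item:decomp_nu} is inherited by the lifted distribution and is what Lemma~\ref{lemma:proplowerbound} consumes---and then upgrades the resulting limit along even exponents to \eqref{eq:limsup} via monotonicity of $p \mapsto \rho(E[A^{\otimes p}])^{1/p}$. You instead generalize the trace argument behind the finite-alphabet result of \cite{Xu2011}: the inequality $\rho(E[A^{\otimes p}])^{1/p} \ge d^{-1/n}\, E[\operatorname{tr}(A_n\cdots A_1)^p]^{1/(pn)}$ for even $p$, the limit $E[\,|\operatorname{tr}(A_n\cdots A_1)|^p\,]^{1/p} \to T_n$ (valid for every compactly supported $\mu$, since the essential supremum of a continuous function equals its supremum over the support of the measure), and the trace characterization $\limsup_n T_n^{1/n} = \hat\rho(\mathcal M)$ obtained from Berger--Wang are all sound. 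The one step you flag as delicate is not: for $\limsup_m |\operatorname{tr}(P^m)|^{1/m} = \rho(P)$ no equidistribution argument is needed, because $\operatorname{tr}(P^m) = \sum_j c_j \lambda_j^m$ over the distinct nonzero eigenvalues with positive integer multiplicities $c_j$, so the generating function $\sum_{m\ge 1} \operatorname{tr}(P^m) z^m = \sum_j c_j \lambda_j z /(1-\lambda_j z)$ has non-cancelling simple poles at the distinct points $1/\lambda_j$, its radius of convergence is exactly $1/\rho(P)$, and Cauchy--Hadamard gives the identity. What your route buys is generality and self-containedness: as you observe, condition~\iref{item:decomp_nu} never enters, so you obtain \eqref{eq:limsup} for an arbitrary compactly supported $\mu$, a slightly stronger statement than the paper's, whose lifting-based argument genuinely relies on that hypothesis; the price is importing the Berger--Wang theorem for bounded matrix families, which the paper's route avoids.
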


\begin{proofsk}
Using Theorem~\ref{theorem:limiting} and the semidefinite lifting
\cite{Blondel2005} of matrices one can show $\hat \rho (\mathcal M) =
\lim_{p\to\infty} \rho(E[X^{\otimes (2p)}])^{1/(2p)}$.  Also it is
possible to see that the sequence $\{ \left( \rho( E[A^{\otimes p}]
)\right)^{1/p} \}_{p=1}^\infty$ is not decreasing. These observations
yield \eqref{eq:limsup}. 
\end{proofsk}

\section{Markovian Case}\label{sec:Markov}

So far we have restricted our attention to the case when the random
variables $\{A_k\}_{k=0}^\infty$ are identically and independently
distributed.  In this section we study a more practical case of when the
process~$\{A_k\}_{k=0}^\infty$ is a time-homogeneous Markov chain. Let
$\{\sigma_k\}_{k=0}^\infty$ be a time-homogeneous Markov chain taking
values in $\{1, \dotsc, N\}$ with the transition probability matrix
$\mathbb{P} = [p_{ij}]$ and the constant initial state $\sigma_0 \in
\{1, \dotsc, N\}$.  Let $\mathcal M = \{ M_1, \dotsc, M_N\}$ be a set of
$d\times d$ square matrices. We define the stochastic process $A =
\{A_k\}_{k=0}^\infty$ by $A_k = M_{\sigma_k}$ and the switched system
$\Sigma_A$ by 
\begin{equation*}
\Sigma_A : x(k+1) = A_k x(k). 
\end{equation*}
The $p$th mean and moment stability of $\Sigma_A$ is defined in the same
way as Definition~\ref{def:MeanStability}.  Also we define the
$L^p$-norm joint spectral radius~$\rho_{p, A}$ of~$A$ by
\eqref{eq:def:pthradius}. 

The next theorem summarizes the relationship between $p$-radius, mean
stability, and moment stability for the Markovian switched
system~$\Sigma_A$. Though the result actually holds for a general $p$,
for simplicity of presentation we will restrict our attention to $p =1$
or $2$. 

\begin{theorem}\label{theorem:MarkovianStability}
Assume that either $p=2$, or $\mathcal M$ leaves a proper cone $K$
invariant and $p=1$. Then the following conditions are equivalent. 
\begin{enumerate} \renewcommand{\theenumi}{\roman{enumi}}
\item \label{item:rho<1:Markov}
$\rho_{p, A} < 1$ for every $\sigma_0$;

\item \label{item:meanst:Markov}
$\Sigma_A$ is $p$th mean stable;

\item \label{item:momentst:Markov}
$\Sigma_A$ is $p$th moment stable;

\item \label{item:spec<1:Markov}
Define the $Nd^p \times Nd^p$ matrix $T_p$ by 
\begin{equation*}
T_p = (\mathbb{P}^\top \otimes I_{d^p}) 
\diag(M_1^{\otimes p}, \dotsc, M_N^{\otimes p}). 
\end{equation*}
Then $\rho(T_p) < 1$. 
\end{enumerate}
\end{theorem}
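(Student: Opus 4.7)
The plan is to reduce the four conditions to a single linear recursion on conditional $p$th moments and to close the equivalence chain via a Perron--Frobenius argument on an invariant cone of $T_p$. First I would introduce, for each $i\in\{1,\dots,N\}$, the conditional moment $Y_i(k):=E[x(k)^{\otimes p}\mathbf{1}_{\sigma_k=i}]\in\mathbb{R}^{d^p}$. Conditioning on $\sigma_k=i$, using the Markov property of $\{\sigma_k\}$, and applying \eqref{eq:ABotimesp} to $x(k+1)^{\otimes p}=A_k^{\otimes p}x(k)^{\otimes p}$ yields
\begin{equation*}
Y_j(k+1)=\sum_{i=1}^N p_{ij}M_i^{\otimes p}Y_i(k),
\end{equation*}
so for the stacked vector $Y(k):=\vec(Y_1(k),\dots,Y_N(k))\in\mathbb{R}^{Nd^p}$ one has $Y(k+1)=T_p Y(k)$ with $T_p$ as in the theorem. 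Moreover $E[x(k)^{\otimes p}]=\sum_i Y_i(k)$, and the admissible initial data satisfy $Y(0)=e_{\sigma_0}\otimes x_0^{\otimes p}$.

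Most directions are then short. For (iv) $\Rightarrow$ (iii): $\rho(T_p)<1$ gives $\norm{T_p^k}\le Ce^{-\beta k}$ for some $\beta>0$, hence $\norm{E[x(k)^{\otimes p}]}\le\sum_i\norm{Y_i(k)}\le C'e^{-\beta k}\norm{x_0}^p$. For (ii) $\Rightarrow$ (iii): convexity of the Euclidean norm yields $\norm{E[x(k)^{\otimes p}]}_2\le E[\norm{x(k)^{\otimes p}}_2]=E[\norm{x(k)}_2^p]$. For (i) $\Leftrightarrow$ (ii): the submultiplicative bound $E[\norm{x(k;x_0)}^p]\le E[\norm{A_k\cdots A_1}^p]\norm{x_0}^p$ handles one direction, while $\norm{M}^p\le C_d\sum_\ell\norm{Me_\ell}^p$ applied with $x_0=e_\ell$ and summed over $\ell$ handles the other.

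For (iii) $\Rightarrow$ (ii) I would split by case. For $p=2$, I would use $E[\norm{x(k)}_2^2]=\operatorname{tr}(E[x(k)x(k)^\top])$ together with the elementary bound $\operatorname{tr}(P)\le C_d\norm{P}$ valid for any positive semidefinite $P$ and any fixed matrix norm. For $p=1$ under cone invariance, I would use cone absoluteness to write $x_0=v-w$ with $v,w\in K$ and $\norm{v}+\norm{w}$ controlled by $\norm{x_0}$; since $x(k;v),x(k;w)\in K$ almost surely, cone linearity of the norm $\norm{\cdot}_f$ from Proposition~\ref{prop:rho(M)} yields $E[\norm{x(k;v)}_f]=f^\top E[x(k;v)]=\norm{E[x(k;v)]}_f$, and similarly for $w$, so that moment stability transfers to mean stability after a triangle inequality on $x(k;x_0)=x(k;v)-x(k;w)$.

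The main obstacle is (iii) $\Rightarrow$ (iv), since moment stability a priori controls only the orbits of $T_p$ starting from rank-one data $e_{\sigma_0}\otimes x_0^{\otimes p}$, whereas the conclusion concerns the full space $\mathbb{R}^{Nd^p}$. The key observation is that $T_p$ leaves a proper cone $\mathcal{C}\subset\mathbb{R}^{Nd^p}$ invariant: for $p=2$, $\mathcal{C}$ is the stacked cone of vectorized $d\times d$ positive semidefinite matrices, and for $p=1$, $\mathcal{C}=K\times\cdots\times K$. By the Perron--Frobenius theorem for cone-preserving linear maps \cite{Vandergraft1968}, $\rho(T_p)$ is an eigenvalue of $T_p$ with some eigenvector $v^*\in\mathcal{C}$. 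Every $v^*\in\mathcal{C}$ admits a finite nonnegative decomposition into rank-one generators $e_{i_0}\otimes x_0^{\otimes p}$---via the spectral decomposition of each positive semidefinite block when $p=2$, and directly from the block structure when $p=1$. Since $T_p^k(e_{i_0}\otimes x_0^{\otimes p})$ coincides with $Y(k)$ under the initial state $(\sigma_0,x_0)=(i_0,x_0)$, moment stability forces $T_p^k v^*\to 0$ exponentially, which is incompatible with $|\rho(T_p)|\ge 1$; hence $\rho(T_p)<1$.
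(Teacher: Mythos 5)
Your overall architecture matches the paper's: your conditional moments $Y_i(k)$ are exactly the quantities $Q_i(k)$ of the paper's Lemma~\ref{lemma:LQvecConnection} (stated there for $p=1$), and the recursion $Y(k+1)=T_pY(k)$ is the engine of both proofs. Where you differ is in ambition: the paper proves only (iv) $\Rightarrow$ (iii) for $p=1$ from this recursion, cites \cite{Costa2004} for the entire $p=2$ equivalence (ii) $\Leftrightarrow$ (iv), defers (i) $\Leftrightarrow$ (ii) $\Leftrightarrow$ (iii) to ``the same way as Proposition~\ref{prop:stabilitychar}'', and omits (ii) $\Rightarrow$ (iv) for $p=1$ altogether. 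You supply explicit arguments for all of these, and your Perron--Frobenius treatment of (iii) $\Rightarrow$ (iv) --- locate a $\rho(T_p)$-eigenvector in an invariant cone, decompose it into admissible initial data, and let moment stability annihilate it --- is the right idea and is essentially how the omitted direction has to go.

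Two points in that last step need repair. First, for $p=2$ the cone of stacked vectorized positive semidefinite matrices is \emph{not} solid in $\mathbb{R}^{Nd^2}$: it spans only the symmetric subspace, so it is not a proper cone there and \cite{Vandergraft1968} does not apply directly. You must either restrict $T_2$ to the invariant symmetric subspace and then argue separately that $\rho(T_2)$ on all of $\mathbb{R}^{Nd^2}$ is attained on that subspace (this holds because each block map $X\mapsto\sum_ip_{ij}M_iXM_i^\top$ is completely positive, so its spectral radius is achieved at a positive semidefinite eigenvector), or simply fall back on \cite{Costa2004} as the paper does. Second, moment stability controls $\norm{E[x(k)^{\otimes p}]}=\bigl\lVert\sum_jY_j(k)\bigr\rVert$, not $Y(k)$ itself, so ``moment stability forces $T_p^kv^*\to0$'' has a missing step: you need the pointedness of the cone to upgrade decay of the sum of the blocks to blockwise decay (for $p=1$, pick $f\in\interior(K^*)$ and use $f^\top y\ge c\norm{y}$ for $y\in K$; for $p=2$, use $0\preceq P_j(k)\preceq\sum_iP_i(k)$). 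Both gaps are fillable by standard arguments, and with them in place your proof is complete --- indeed more complete than the one printed in the paper.
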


To prove this theorem we introduce the following
definitions~\cite{Costa2004}. Let $x(\cdot;x_0,\sigma_0)$ denote the
trajectory of $\Sigma_A$ with the initial conditions $x_0$ and
$\sigma_0$. For each $k$ define $Q(k) = (Q_i(k), \dotsc, Q_N(k)) \in
(\mathbb{R}^d)^N$ by $Q_i(k;x_0,\sigma_0) :=
E[x(k;x_0,\sigma_0)_{\sigma_k=i}]$, and the operator $\mathcal L$ on
$(\mathbb{R}^d)^N$ by  $\mathcal L(H) := (\mathcal L_1(H), \dotsc,
\mathcal L_N(H))$ and  $\mathcal L_j(H) := \sum^n_{i=1} p_{ij} A_i H_i$.
The proof of the next lemma is omitted because it can be shown in the
same way as \cite{Costa2004}. 
\begin{lemma}\label{lemma:LQvecConnection}
Let $\mathcal L$ and $Q$ be as above. 
\begin{enumerate}
\item $E[x(k;x_0,\sigma_0)] = \sum_{j=1}^N Q_j(k;x_0,\sigma_0)$. 

\item $Q(k+1;x_0,\sigma_0) = \mathcal L(Q(k;x_0,\sigma_0))$. 

\item For every $H\in (\mathbb{R}^d)^N$, $\vec{\mathcal L(H)}
= T_1(\vec H)$. 
\end{enumerate}
\end{lemma}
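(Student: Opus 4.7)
The plan is to interpret the notation $Q_i(k;x_0,\sigma_0) := E[x(k;x_0,\sigma_0)\,\mathbf{1}_{\{\sigma_k = i\}}]$ (the subscript ``$\sigma_k=i$'' denoting the indicator of that event), which is the standard convention in the Markov jump linear systems literature (\cite{Costa2004}), and then verify each of the three identities in turn by direct computation.

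For item (1), I would use that the events $\{\sigma_k=1\},\dotsc,\{\sigma_k=N\}$ partition the sample space. Summing the indicators to $1$ and using linearity of expectation gives
\begin{equation*}
\sum_{j=1}^{N} Q_j(k;x_0,\sigma_0) = E\!\left[x(k;x_0,\sigma_0)\sum_{j=1}^{N}\mathbf{1}_{\{\sigma_k=j\}}\right] = E[x(k;x_0,\sigma_0)].
\end{equation*}

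For item (2), let $\mathcal F_k$ denote the $\sigma$-algebra generated by $\sigma_0,\dotsc,\sigma_k$, with respect to which both $x(k)$ and $\mathbf{1}_{\{\sigma_k=i\}}$ are measurable. Starting from $x(k+1) = A_k x(k) = M_{\sigma_k} x(k)$ and $A_k = M_i$ on $\{\sigma_k=i\}$, I would condition on $\mathcal F_k$ and apply the Markov property $E[\mathbf{1}_{\{\sigma_{k+1}=j\}}\mid \mathcal F_k] = p_{\sigma_k,j}$ to obtain
\begin{equation*}
\begin{aligned}
Q_j(k+1) &= E\!\left[M_{\sigma_k}x(k)\,\mathbf{1}_{\{\sigma_{k+1}=j\}}\right] \\
&= \sum_{i=1}^{N} M_i\, E\!\left[x(k)\,\mathbf{1}_{\{\sigma_k=i\}}\,E[\mathbf{1}_{\{\sigma_{k+1}=j\}}\mid\mathcal F_k]\right] \\
&= \sum_{i=1}^{N} p_{ij} M_i Q_i(k) = \mathcal L_j(Q(k)),
\end{aligned}
\end{equation*}
which is exactly the $j$th component of $\mathcal L(Q(k))$.

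For item (3), I would view $T_1$ as an $N\times N$ block matrix with $d\times d$ blocks. Writing out $(\mathbb{P}^\top\otimes I_d)$ block by block gives the $(j,i)$-block $p_{ij} I_d$, and post-multiplication by $\diag(M_1,\dotsc,M_N)$ turns this into $p_{ij} M_i$. Hence the $j$th block of $T_1(\vec H)$ is $\sum_{i=1}^{N} p_{ij} M_i H_i = \mathcal L_j(H)$, which matches the $j$th block of $\vec{\mathcal L(H)}$. The main obstacle I foresee is the rigorous handling of the conditioning step in (2)---specifically, ensuring that $x(k)$ is $\mathcal F_k$-measurable so that it can be pulled outside the inner conditional expectation---while (1) and (3) are essentially bookkeeping once the convention for $Q_i$ is fixed.
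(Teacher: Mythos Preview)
Your proposal is correct and follows exactly the standard argument from \cite{Costa2004}; since the paper itself omits the proof and simply refers to that reference, there is nothing to compare---your write-up is precisely what the authors had in mind. The only point worth noting is that $x(k)=M_{\sigma_{k-1}}\cdots M_{\sigma_0}x_0$ is actually $\mathcal F_{k-1}$-measurable (hence a fortiori $\mathcal F_k$-measurable), so the ``obstacle'' you flagged in item~(2) is harmless.
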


With this lemma we can prove Theorem~\ref{theorem:MarkovianStability}. 

\begin{proofof}{Theorem~\ref{theorem:MarkovianStability}}
The equivalence \iref{item:rho<1:Markov} $\Leftrightarrow$
\iref{item:meanst:Markov} $\Leftrightarrow$ \iref{item:momentst:Markov}
can be proved in the same way as Proposition \ref{prop:stabilitychar}. 
Also, when $p=2$, the equivalence \iref{item:meanst:Markov}
$\Leftrightarrow$ \iref{item:spec<1:Markov} is proved in
\cite{Costa2004}. Therefore it is sufficient to show the implications
\iref{item:spec<1:Markov} $\Rightarrow$ \iref{item:momentst:Markov} and
\iref{item:meanst:Markov} $\Rightarrow$ \iref{item:spec<1:Markov} under
the assumption that $p=1$ and $\mathcal M$ leaves a proper cone $K$
invariant

[\iref{item:spec<1:Markov} $\Rightarrow$
\iref{item:momentst:Markov}]:\quad  Assume $\rho(T_1) < 1$ and let $x_0$
and $\sigma_0$ be arbitrary. Then, by Lemma \ref{lemma:LQvecConnection},
\begin{equation*}
\begin{aligned}
\vec Q(k;x_0,\sigma_0) 
&= \vec(\mathcal{L}^k(Q(0;x_0,\sigma_0))) 
\\
&= T_1^k \vec Q(0;x_0,\sigma_0) \to 0, 
\end{aligned}
\end{equation*}
which shows $Q(k;x_0,\sigma_0) \to 0$ as $k\to\infty$ and therefore
$E[x(k;x_0,\sigma_0)]\to 0$  for all $x_0$ and $\sigma_0$. Thus
$\Sigma_A$ is first moment stable. 

[\iref{item:meanst:Markov} $\Rightarrow$
\iref{item:spec<1:Markov}]:\quad Omitted due to limitations of space. 
\end{proofof}

The next corollary of Theorem~\ref{theorem:MarkovianStability}  enables
us to compute $p$-radius efficiently. 
\begin{corollary} \label{corollary:prad:computation:Markov}
If $p$ and $\mu$ satisfies the assumption in
Theorem~\ref{theorem:MarkovianStability} then it holds that 
\begin{equation*}
\rho_{p, A} = \rho(T_p)^{1/p}. 
\end{equation*}
\end{corollary}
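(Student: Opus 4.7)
The plan is to derive the identity $\rho_{p,A} = \rho(T_p)^{1/p}$ from Theorem~\ref{theorem:MarkovianStability} by a standard homogeneity (scaling) argument. The key observation is that the theorem provides a two-sided equivalence between stability and a spectral condition, and scaling the matrices $M_i$ converts this equivalence into a sharp identity between the two scalar quantities $\rho_{p,A}$ and $\rho(T_p)^{1/p}$.

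More concretely, first I would fix $c>0$ and replace each $M_i$ by $\tilde M_i := M_i/c$, leaving the transition matrix $\mathbb{P}$ and the initial distribution unchanged. Let $\tilde A_k$ and $\tilde T_p$ denote the corresponding objects. Since the $\tilde M_i$ are scalar multiples of the $M_i$, the hypotheses of Theorem~\ref{theorem:MarkovianStability} (either $p=2$, or $p=1$ together with cone invariance of $\mathcal M$) transfer immediately to the scaled system: a proper cone $K$ is invariant under $\{M_i\}$ iff it is invariant under $\{M_i/c\}$. A short computation using \eqref{eq:def:pthradius} gives $\rho_{p,\tilde A} = \rho_{p,A}/c$, and by \eqref{eq:ABotimesp} together with the definition of $T_p$ one obtains $\tilde T_p = T_p/c^p$, hence $\rho(\tilde T_p) = \rho(T_p)/c^p$.

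Next I would apply the equivalence \iref{item:rho<1:Markov} $\Leftrightarrow$ \iref{item:spec<1:Markov} of Theorem~\ref{theorem:MarkovianStability} to the scaled system, obtaining that $\rho_{p,A}/c < 1$ if and only if $\rho(T_p)/c^p < 1$, i.e., $c > \rho_{p,A}$ iff $c > \rho(T_p)^{1/p}$. Since this holds for every $c>0$, the two thresholds must coincide, giving the desired equality $\rho_{p,A} = \rho(T_p)^{1/p}$.

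I do not expect any real obstacle in this argument; the only point that requires attention is verifying that the assumptions of Theorem~\ref{theorem:MarkovianStability} are preserved under positive scaling, which is immediate. The argument is therefore essentially a routine consequence of the theorem, but it is worth writing out carefully because it is the mechanism by which a qualitative stability criterion becomes a quantitative formula for $p$-radius.
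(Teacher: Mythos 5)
Your scaling argument is correct and is the standard route to this corollary: the paper states it without proof, but this homogeneity argument is exactly the intended mechanism, and all of its ingredients --- preservation of the hypotheses under positive scaling (a cone is invariant under $M_i$ iff under $M_i/c$), the identities $\rho_{p,\tilde A}=\rho_{p,A}/c$ and $\rho(\tilde T_p)=\rho(T_p)/c^p$, and the threshold comparison --- check out. The only point worth flagging is that in the Markovian setting $\rho_{p,A}$ a priori depends on the initial mode $\sigma_0$, and item \iref{item:rho<1:Markov} of Theorem~\ref{theorem:MarkovianStability} quantifies over all $\sigma_0$, so what your argument literally yields is $\max_{\sigma_0}\rho_{p,A}=\rho(T_p)^{1/p}$; this imprecision is already present in the corollary's statement itself, so it is not a gap in your proof.
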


Finally let us apply the results obtained in this section to the
stabilization of Markovian switched systems 
Let 
\begin{equation*}
\begin{gathered}
N=3,\ \mathbb{P} = 
\begin{bmatrix}
0.3 &  0.5 &  0.2\\
0.5 &  0.3 &  0.2\\
0.2 &  0.2 &  0.6
\end{bmatrix},\\
M_1 = \begin{bmatrix}
.32 & .49\\ .24&.33
\end{bmatrix},\ 
M_2 = \begin{bmatrix}
.53 & .65\\ .75& .85
\end{bmatrix},\ 
M_3 = \begin{bmatrix}
1.5 & .51\\ .18& .69
\end{bmatrix}. 
\end{gathered}
\end{equation*}

Corollary~\ref{corollary:prad:computation:Markov} gives $\rho_{1,A} =
1.221$ and therefore $\Sigma_A$ is not first mean stable. Let us
consider the stabilization of $\Sigma_A$. Define the switched system
with input by $x(k+1) = A_{k}x(k) + b_{k}u(k)$ where $b_k =
n_{\sigma_k}$ with
\begin{equation*}
n_1 = \begin{bmatrix}
-0.56 \\ 0.39
\end{bmatrix},\ 
n_2 = \begin{bmatrix}
0.40 \\ -1.70
\end{bmatrix},\ 
n_3 = \begin{bmatrix}
-0.37\\ -0.49
\end{bmatrix}.
\end{equation*}
As an input we use the static state feedback $u(k) = fx(k)$ for some $f
\in \mathbb{R}^{1\times 2}$. This yields the controlled system
\begin{equation*} 
\Sigma_{A+bf}: x(k+1) = (A_{k} + b_{k}f)x(k) 
\end{equation*}

Let $f = \bm{0.36&0.50}$. Then all the matrices $M_i+n_if$ ($i=1, 2, 3$)
have only positive entries. Therefore we can use
Corollary~\ref{corollary:prad:computation:Markov} to find $\rho_{1,
A+bf} = 0.9554$. Therefore the controlled system $\Sigma_{A+bf}$ is
first mean stable by Theorem~\ref{theorem:MarkovianStability}.
Fig.~\ref{fig:figure}
\begin{figure}[tb]
\begin{center}
\includegraphics[scale=\myscale]{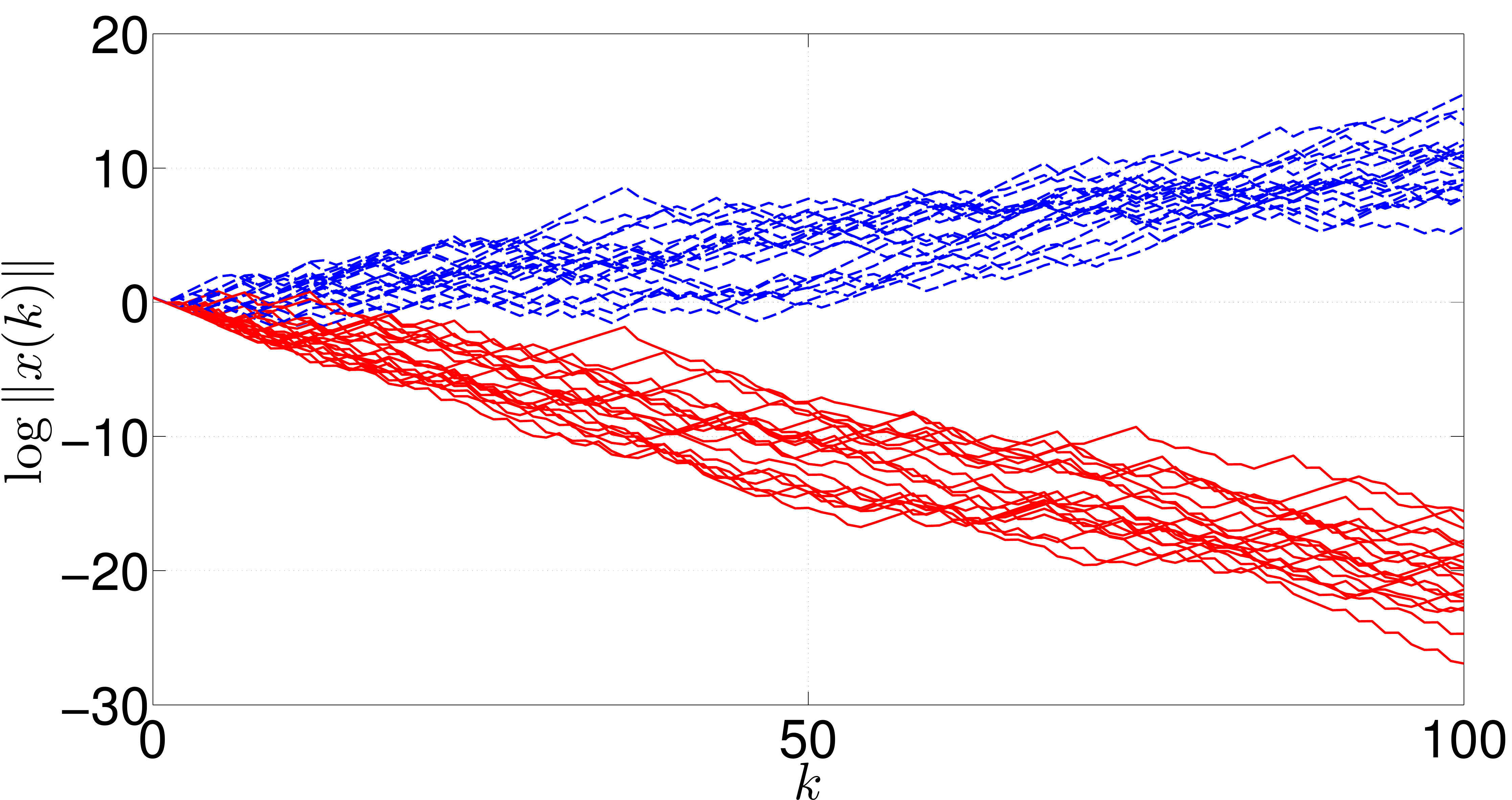} 
\caption{{Sample paths of switched systems. Dashed: Before stabilization.
Solid:  After stabilization}}
\label{fig:figure}
\end{center}
\end{figure}
shows the 20 sample paths of the original switched system and the
stabilized switched system. Finding a systematic way to obtain a
stabilizing feedback gain is left as an open problem.

\section{Conclusion}

We investigated the mean stability of a class of  discrete-time
stochastic switched systems. First we presented the equivalence between
mean stability and the existence of a homogeneous Lyapunov function.
Then we showed that, in the limit of $p\to\infty$, the $p$th mean
stability becomes equivalent to the absolute asymptotic stability of an
associated  deterministic switched system.  Finally the characterization
of the stability of a class of Markovian switched systems was given. 
Throughout the paper $L^p$-norm joint spectral radius has played a key
role.


\end{document}